\theoremstyle{plain}
\newtheorem{thm}[algorithm]{Theorem}
\theoremstyle{definition}
\newtheorem{lem}[algorithm]{Lemma}
\newtheorem{coro}[algorithm]{Corollary}
\newtheorem{propo}[algorithm]{Proposition}
\numberwithin{algorithm}{subsection}
\newtheorem{defn}[algorithm]{Definition}
\newtheorem*{cleim}{Claim}
\newtheorem{rmk}[algorithm]{Remark}
\numberwithin{algorithm}{subsection}
\theoremstyle{remark}
\newtheorem*{ack}{Acknowledgments}
\newcommand{\Kb}{\bar{\mathbb{K}}}
\newcommand{\pp}{\mathbb{P}}
\newcommand{\esse}{\mathbb{S}}
\newcommand{\sdf}{s_{2}(\underline{F})} 
\newcommand{\odi}[1]{\mathcal{O}_{#1}}
\newcommand{\mappa}[2]{\xymatrix@1{{#1} \ar[r] & {#2}}}
\newcommand{\spazio}{\rule{1 pt}{0 cm}}
\newcommand{\gr}[1]{\mathbf{#1}}
\newcommand{\so}[1]{\underline{#1}}
\newcommand{\aprimat}[1]{\left( \begin{array}{#1}}
\newcommand{\chiudimat}{\end{array} \right)}
\newcommand{\SET}{\textbf{set }}
\newcommand{\COMPUTE}{\textbf{compute }}
\newcommand{\SUBSTITUTE}{\textbf{substitute }}
\newcommand{\CHOOSE}{\textbf{choose }}
\DeclareMathOperator{\Pf}{Pf}
\DeclareMathOperator{\depth}{depth}
\DeclareMathOperator{\im}{Im}
\DeclareMathOperator{\hd}{hd}
\DeclareMathOperator{\Ext}{Ext}
\DeclareMathOperator{\coker}{coker}
\DeclareMathOperator{\de}{d}
\DeclareMathOperator{\Vi}{V}
\DeclareMathOperator{\TG}{T}
\DeclareMathOperator{\GL}{GL}
\DeclareMathOperator{\Hh}{H}
\DeclareMathOperator{\pf}{pf}
\begin{document}

\title[Pfaffian representations of cubic surfaces]{Pfaffian representations of cubic surfaces}
\author{Fabio Tanturri}
\address{SISSA, via Bonomea 265\\
34136 Trieste, Italy}
\email{tanturri@sissa.it}
\thanks{Partially supported by the PRIN 2008 ``Geometria delle variet\`a algebriche e dei loro spazi di moduli'' and by the European Research Network ``GREFI-GRIFGA''}

\subjclass[2010]{Primary: 14Q10; Secondary: 14J99}

\keywords{Pfaffian representations, cubic surfaces, ACM sheaves, determinantal varieties}

\date{}

\begin{abstract}
Let $\mathbb{K}$ be a field of characteristic zero. We describe an algorithm which requires a homogeneous polynomial $F$ of degree three in $\mathbb{K}[x_{0},x_1,x_{2},x_{3}]$ and a zero $\gr{a}$ of $F$ in $\pp^{3}_{\mathbb{K}}$ and ensures a linear Pfaffian representation of $\Vi(F)$ with entries in $\mathbb{K}[x_{0},x_{1},x_{2},x_{3}]$, under mild assumptions on $F$ and $\gr{a}$. We use this result to give an explicit construction of (and to prove the existence of) a linear Pfaffian representation of $\Vi(F)$, with entries in $\mathbb{K}'[x_{0},x_{1},x_{2},x_{3}]$, being $\mathbb{K}'$ an algebraic extension of $\mathbb{K}$ of degree at most six. An explicit example of such a construction is given.
\end{abstract}

\maketitle

\section{Introduction}
Let $\mathbb{K}$ be a field of characteristic zero and let $X$ be the hypersurface in $\pp^{n}_{\mathbb{K}}$ defined by a polynomial $F \in \mathbb{K}[x_{0},x_{1},\dotsc,x_{n}]$. One may ask whether the polynomial $F^{k}$ is the determinant of a matrix $M$ with entries in $\mathbb{K}[x_{0},x_{1},\dotsc,x_{n}]$, for some integer $k$.

For $k=1$, such a matrix $M$ is said to be a \emph{determinantal representation} of $X$. If the entries are linear forms, then the determinantal representation is said to be \emph{linear}. Linear determinantal representations of curves and surfaces of small degree are a classical subject and date back to the middle of nineteenth century; see for example \cite{Beauville}, \cite{Dolgachev} for a historical account.

A relevant class of matrices with determinant $F^{2}$ are Pfaffian representations, that is, skew-symmetric matrices whose Pfaffian is $F$, up to constants. Let us recall the following definition.
\begin{defn}[Pfaffian]
Let $T=(t_{ij})$ be a skew-symmetric matrix of even size $2n$ with entries in a ring $R$. Then its determinant is the square of an element in $R$, called the \emph{Pfaffian} of $T$.\\
If we denote by $T_{ij}$ the square matrix of order $(2n-2)$ obtained by deleting from $T$ the $i$-th and $j$-th rows and columns, the Pfaffian is defined recursively as
\begin{equation}
\label{formulapfaff}
\Pf(T)=\left\{ \begin{array}{ll}
\sum_{j< 2n}(-1)^{j}t_{2n,j}\Pf(T_{2n,j})  \qquad
& \mbox{if } n \geq 2\\
t_{12} & \mbox{if } n = 1.
\end{array}
\right.
\end{equation}
\end{defn}
Pfaffian representations are a generalization of determinantal representations, since from a determinantal representation $M$ we get a Pfaffian representation
$$
\left(
\begin{array}{c|c}
0 & M \\
\hline -M & 0
\end{array}
\right).
$$
The references about Pfaffian representations are very recent, even though some general results were probably well-known to the experts before. In \cite{Beauville}, Beauville collects many results about determinantal and Pfaffian representations, giving criteria for the existence of linear Pfaffian representations of plane curves, surfaces, threefolds and fourfolds. The fact that a generic cubic threefold can be written as a linear Pfaffian had been proved by Adler \cite[Apx.V]{AdlerRamanan}, with $\mathbb{K}=\bar{\mathbb{K}}$. With the same method used by Adler, in \cite{IlievMarkushevich} it is proved that a general quartic threefold admits a linear Pfaffian representation. A non-computer-assisted proof of this fact can be found in \cite{BrambillaFaenzi}.\\
Again in the case $\mathbb{K}=\bar{\mathbb{K}}$, linear Pfaffian representations of plane curves and their elementary transformations are the subject of \cite{BuckleyKosirPla} and \cite{Buckley}; in \cite{FaenziARe} and \cite{ChiantiniFaenziOnG}, respectively almost quadratic and almost linear Pfaffian representations of surfaces are considered. In \cite{CoskunKulkarniMustopa} it is proved that every smooth quartic surface admits a linear Pfaffian representation, a result which strengthens the Beauville-Schreyer's one in \cite{Beauville}.\\

In this paper we will use the following two definitions.
\begin{defn}
Let $F \in \mathbb{K}[x_{0},x_{1},\dotsc,x_{n}]$ define the hypersurface $X$ and let $\mathbb{K}'$ be a field containing $\mathbb{K}$. A linear Pfaffian $\mathbb{K}'$-representation of $X$ is a skew-symmetric matrix whose Pfaffian is $F$, up to constants, and whose entries are linear forms in $\mathbb{K}'[x_{0},x_{1},\dotsc,x_{n}]$.
\end{defn}
\begin{defn}
[$\mathbb{K}$-point]
\label{kpoint}
If a point $\gr{a} \in \pp^{n}_{\Kb}$ admits a representative $\so{a} \in \mathbb{A}^{n+1}_{\mathbb{K}}$, then it will be called a $\mathbb{K}$-point.
\end{defn}
\noindent By convention, hypersurfaces will be considered in $\pp^{n}_{\bar{\mathbb{K}}}$, being $\bar{\mathbb{K}}$ the algebraic closure of $\mathbb{K}$. In this way, $X$ is non-empty even if its defining polynomial $F \in \mathbb{K}[x_{0},x_{1},\dotsc,x_{n}]$ has no zero in $\mathbb{A}^{n+1}_{\mathbb{K}}$, that is, if $X$ has no $\mathbb{K}$-points.\\

According to these notations, in \cite{Beauville} Beauville provided a proof of the following theorem:
\begin{thm} \label{Beavu}
Let $\esse$ be a surface of degree $d$ in $\pp^{3}_{\Kb}$, without singular $\mathbb{K}$-points. The following conditions are equivalent:
\begin{enumerate}
\item $\esse$ admits a linear Pfaffian $\mathbb{K}$-representation;
\item $\esse \cap \pp^{3}_{\mathbb{K}}$ contains a finite, reduced, arithmetically Gorenstein subscheme $Z$ of index $2d-5$, not contained in any surface of degree $d-2$.
\end{enumerate}
Moreover, the degree of $Z$ is $\tfrac{1}{6}d(d-1)(2d-1)$.
\end{thm}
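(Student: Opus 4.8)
The plan is to follow Beauville's cohomological approach, which translates linear Pfaffian representations into the language of arithmetically Cohen--Macaulay (ACM) sheaves on the surface. The starting observation is that a skew-symmetric matrix $M$ of linear forms on $\pp^3_{\Kb}$ with $\Pf(M) = F$ (up to constants) gives rise, via its cokernel, to a sheaf $\mathcal{E}$ on $\esse = \Vi(F)$ that is locally free of rank $1$ away from the singular points, with a self-duality coming from the skew-symmetry; conversely, such a sheaf with the right cohomological vanishing yields back a Pfaffian representation. So the first step would be to make precise the equivalence between (i) linear Pfaffian $\mathbb{K}$-representations of $\esse$ and (ii) certain ACM sheaves $\mathcal{E}$ on $\esse$, defined over $\mathbb{K}$, satisfying a self-duality $\mathcal{E}^{\vee} \cong \mathcal{E}(t)$ for the appropriate twist $t$ together with the ACM conditions $\Hh^i(\mathcal{E}(j)) = 0$ for $i = 1,2$ and all $j$.

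Next I would realize the relevant sheaf as (a twist of) the ideal sheaf of a subscheme. The natural candidate is to take $\mathcal{E}$ so that a general section cuts out a zero-dimensional subscheme $Z \subset \esse$; the self-duality of $\mathcal{E}$ then forces $Z$ to be arithmetically Gorenstein, and conversely from an arithmetically Gorenstein $Z$ one builds $\mathcal{E}$ as an extension, using that the canonical module of $\mathcal{O}_Z$ is free of rank one. The numerical bookkeeping — matching the twist $t$ in the self-duality with the index $2d-5$ of $Z$, and ruling out $Z$ lying on a surface of degree $d-2$ as exactly the condition that $\mathcal{E}$ has no spurious sections forcing the matrix to be non-minimal or degenerate — is where one pins down the precise statement of condition (2). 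The degree count $\deg Z = \tfrac16 d(d-1)(2d-1)$ would then drop out of a Riemann--Roch / Hilbert-polynomial computation for $\mathcal{E}$ on $\esse$, or equivalently by summing $\sum_{k=0}^{d-1} k^2$ after identifying the Hilbert function of $Z$ with that of a ``staircase'' determined by the resolution.

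The hypothesis ``without singular $\mathbb{K}$-points'' would be used to guarantee that the sheaf $\mathcal{E}$ is a line bundle at every $\mathbb{K}$-rational point, so that localizing the skew-symmetric presentation works over $\mathbb{K}$ rather than only over $\Kb$, and conversely that the subscheme $Z$ one produces avoids the singular locus in the relevant sense; descent of all the cohomological data from $\Kb$ to $\mathbb{K}$ is harmless because the vanishing conditions and the self-duality are Galois-stable and $\mathrm{char}\,\mathbb{K} = 0$. The implication $(1) \Rightarrow (2)$ is the more formal direction: extract $\mathcal{E}$ from $M$, take a general section, read off that its vanishing locus $Z$ is finite, reduced, arithmetically Gorenstein of the correct index, and not on a degree-$(d-2)$ surface. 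The implication $(2) \Rightarrow (1)$ is the one requiring real work: one must build $\mathcal{E}$ from $Z$, check it is ACM and self-dual with the right twist, verify that its minimal graded free resolution over $\mathbb{K}[x_0,\dots,x_3]$ is self-dual of length one shifted appropriately (this is where the arithmetically Gorenstein hypothesis and the Buchsbaum--Eisenbud structure theorem for Gorenstein ideals of codimension $3$ enter), and finally extract from that resolution a skew-symmetrization of the presentation matrix whose Pfaffian is $F$.

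I expect the main obstacle to be the second implication, and specifically two coupled points: first, showing that the presentation matrix of $\mathcal{E}$ can actually be chosen skew-symmetric with linear entries — this is not automatic from self-duality of $\mathcal{E}$ alone but needs the self-duality to be compatible with the resolution and needs the resolution to be \emph{linear} in the right degrees, which is exactly what the ``not contained in any surface of degree $d-2$'' clause is there to ensure; and second, carrying this out over the non-closed field $\mathbb{K}$, i.e. checking that the isomorphism realizing $\mathcal{E}^{\vee} \cong \mathcal{E}(t)$ and the resulting skew form can be normalized without enlarging $\mathbb{K}$, using that $\mathbb{K}$ has characteristic zero and that $\esse$ has no singular $\mathbb{K}$-points so no obstruction to local freeness at $\mathbb{K}$-points arises.
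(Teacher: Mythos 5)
Your overall route is the same as the one the paper points to: this theorem is not proved in the paper at all but is quoted from Beauville's \emph{Determinantal hypersurfaces}, and the paper only records the strategy, namely that the proof ``is based on considering the rank-two vector bundle $\coker(M)$ and its scheme $Z$ associated via the Hartshorne--Serre correspondence,'' with the Buchsbaum--Eisenbud structure theorem as the engine for the converse direction. Your proposal fleshes out exactly that strategy (ACM cokernel sheaf with a twisted self-duality from the skew-symmetry, zero locus of a general section giving the AG scheme $Z$, Serre's extension construction plus the codimension-three Gorenstein structure theorem to go back), and your numerical remarks --- the index $2d-5$, the degree $\sum k^{2}=\tfrac16 d(d-1)(2d-1)$, and the role of the ``not on a surface of degree $d-2$'' clause in forcing the presentation to be linear --- are all the right ingredients.

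There is, however, one concrete error at the foundation of your first step: the cokernel of a linear Pfaffian representation is a sheaf of rank \emph{two} on $\esse$, not rank one. A skew-symmetric $2d\times 2d$ matrix $M$ of linear forms with $\Pf(M)=F$ has $\det M=F^{2}$, and at a general point of $\esse$ the Pfaffian vanishes to order one, so $M$ drops rank by two there (the corank of a skew-symmetric matrix is even); hence $\coker(M)$ has generic rank two on $\esse$. This is not a cosmetic point: everything you do afterwards --- a general section vanishing on a finite scheme $Z$, the Hartshorne--Serre correspondence, building $\mathcal{E}$ back as an extension of $\mathcal{I}_{Z}$-type sheaves --- only makes sense for a rank-two sheaf; a section of a rank-one sheaf on a surface vanishes on a divisor, and the extension construction from a codimension-two subscheme produces a rank-two bundle. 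Your later steps implicitly assume rank two, so the argument is salvageable by simply correcting the first sentence, but as written the opening claim is false and inconsistent with the rest of the proposal.
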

\noindent Here a finite, reduced subscheme $Z$ of degree $c$ in $\pp^{n}_{\mathbb{K}}$, with ideal $I_{Z} \subset \mathbb{K}[x_{0},x_{1},\dotsc,x_{n}]$, is said to be arithmetically Gorenstein (AG for short) if $\mathbb{K}[x_{0},x_{1},\dotsc,x_{n}]/I$ is a Gorenstein ring. For such a scheme, its \emph{index} is the (unique) integer $N$ such that
\begin{equation} \label{Hilsim}
\dim \left(R/I_{Z}\right)_{p}+ \dim \left(R/I_{Z}\right)_{N-p}=c \qquad \mbox{ for all $p \in Z$}.
\end{equation}

The proof of Theorem \ref{Beavu} is based on considering the rank-two vector bundle $\coker(M)$ and its scheme $Z$ associated via the Hartshorne-Serre correspondence. As remarked by Beauville, another way to prove the existence of a Pfaffian representation is via the Buchsbaum-Eisenbud Structure Theorem in \cite{BuchsbaumEisenbud}, which we state after the following definition.
\begin{defn}[depth, Gorenstein ideal]
Let $I$ be an ideal in the ring $R$. Let $M$ be an $R$-module. Then $\depth(I,M)$ is the length of a maximal regular $M$-sequence contained in $I$.\\
The ideal $I$ is said to be Gorenstein if
\begin{equation}\label{defGore}
\depth(I,R)=\hd(R/I)=k \qquad \mbox{and} \qquad \Ext_{R}^{k}(R/I,R)\cong R/I
\end{equation}
for some $k \in \mathbb{N}$, where $\hd$ denotes the homological dimension.
\end{defn}
\begin{thm}[Buchsbaum-Eisenbud Structure Theorem] \spazio \label{BucEis}
\begin{enumerate}
\item Let $n \geq 3$ be an odd integer, and let $\mathcal{M}$ be a free module of rank $n$ over a Noetherian local ring $R$ with maximal ideal $J$. Let $f:\mappa{\mathcal{M}}{\mathcal{M}^{\ast}}$ be an alternating map of rank $n-1$ whose image is contained in $J\cdot \mathcal{M}^{\ast}$ and let $I=\Pf_{n-1}(f)$ be the ideal generated by the $(n-1) \times (n-1)$ Pfaffians of the matrix representing $f$. If $\depth(I,R)=3$, then $I$ is Gorenstein, and the minimal number of generators of $I$ is $n$.
\item Every Gorenstein ideal $I$ of $R$ with $\depth(I,R)=3$ arises as in 1..
\end{enumerate}
\end{thm}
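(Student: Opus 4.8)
The statement is the Buchsbaum--Eisenbud structure theorem, and the plan is to handle both parts through the same device: the Pfaffian complex attached to an alternating map, and its self-duality. Fix an odd $n\ge 3$ and an alternating $n\times n$ matrix $A$ representing $f\colon\mathcal{M}\to\mathcal{M}^{\ast}$ over the Noetherian local ring $(R,J)$. I would first invoke the classical Pfaffian expansion: letting $p=(p_{1},\dotsc,p_{n})$ with $p_{i}=\pm\Pf(A_{\hat\imath})$ the submaximal Pfaffian obtained by deleting the $i$-th row and column (with the usual signs), one has $Ap=0$. Hence
\[
\mathcal{F}\colon\quad 0\longrightarrow R\xrightarrow{\;p\;}\mathcal{M}^{\ast}\xrightarrow{\;f\;}\mathcal{M}\xrightarrow{\;p^{T}\;}R
\]
is a complex of free modules whose cokernel presentation ideal is exactly $I=\Pf_{n-1}(f)=(p_{1},\dotsc,p_{n})$, and, since $f^{\ast}=-f$, the complex $\mathcal{F}$ is isomorphic to its own $R$-dual $\mathcal{F}^{\ast}$.

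For part~(1) I would assume $\depth(I,R)=3$ and apply the Buchsbaum--Eisenbud acyclicity criterion to $\mathcal{F}$. The differentials have ranks $1,n-1,1$, so the rank (alternating-sum) condition $1+(n-1)=(n-1)+1=n$ holds; for the depth condition, the two outer maps contribute the ideal of entries of $p$, which is $I$, while the middle map contributes the ideal $I_{n-1}(A)$ of ordinary $(n-1)$-minors of $A$, and the product-of-Pfaffians identity $\det A_{\hat\imath\hat\jmath}=\pm p_{i}p_{j}$ gives $I_{n-1}(A)=I^{2}$, hence $\depth(I_{n-1}(A),R)=\depth(I,R)=3$. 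So the inequalities $\ge 1,\ge 2,\ge 3$ all hold, $\mathcal{F}$ is a free resolution of $R/I$ of length $3$, and self-duality yields $\Ext^{3}_{R}(R/I,R)\cong R/I$; together with $\hd(R/I)=3=\depth(I,R)$ this shows $I$ is Gorenstein. Finally, the hypothesis $\im f\subseteq J\cdot\mathcal{M}^{\ast}$ puts every entry of $A$ in $J$, and since $n-1\ge 2$ each $p_{i}$ is then a positive-degree polynomial in those entries (a single entry if $n=3$), so $p_{i}\in J$ too; thus $\mathcal{F}$ is minimal and $\mu(I)=\operatorname{rank}\mathcal{M}=n$.

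For part~(2) I would start from a Gorenstein ideal $I$ with $\depth(I,R)=\hd(R/I)=3$ and a minimal free resolution $\mathcal{G}\colon 0\to G_{3}\to G_{2}\to G_{1}\to R$ of $R/I$ (of length $3$, since $\hd(R/I)=3$). As $\operatorname{grade}(I)=3$ kills $\Ext^{i}_{R}(R/I,R)$ for $i<3$, the dual $\mathcal{G}^{\ast}$ is again a minimal free resolution, now of $\Ext^{3}_{R}(R/I,R)\cong R/I$; uniqueness of minimal free resolutions then gives an isomorphism $\mathcal{G}\xrightarrow{\sim}\mathcal{G}^{\ast}$, and comparing ranks forces $G_{3}\cong R$ and $G_{1}\cong G_{2}^{\ast}$, so $\operatorname{rank}G_{1}=\operatorname{rank}G_{2}=:n$. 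The crucial step is to upgrade the resulting self-duality of $\phi_{2}$ to an honest equality, i.e.\ to choose bases in which $\phi_{2}$ is represented by a genuinely alternating matrix $A$ with $\Pf_{n-1}(A)=I$; this is achieved by symmetrizing the comparison isomorphism, replacing it by (one half of) its sum with its own dual — legitimate in the present setting, where $2$ is a unit. Once $\phi_{2}=A$ is alternating, its rank is $n-1$ because ranks must sum to $n$ and $\phi_{1},\phi_{3}$ have rank $1$; an alternating matrix has even rank, so $n$ is odd, and $n\ge 3$ since a Gorenstein ideal of depth $3$ is not principal. Unwinding $\mathcal{G}$ identifies its outer differentials with $p$ and $p^{T}$, exhibiting $I$ as in part~(1).

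The main obstacle is the symmetrization step of part~(2): turning the abstract self-duality of the minimal free resolution into an explicitly skew-symmetric middle differential (and choosing signs coherently). Everything else reduces to the acyclicity criterion and the elementary identities for submaximal Pfaffians.
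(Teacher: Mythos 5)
Your part (1) is correct and is the standard argument: the Buchsbaum--Eisenbud acyclicity criterion applied to the self-dual complex $0\to R\xrightarrow{p}\mathcal{M}\xrightarrow{f}\mathcal{M}^{\ast}\xrightarrow{p^{t}}R$, with the classical identity $\det A_{\hat\imath\hat\jmath}=\pm p_{i}p_{j}$ giving $I_{n-1}(A)=I^{2}$ and hence the required depth for the middle map. (For the record, the paper does not prove this theorem: it quotes it from Buchsbaum--Eisenbud and, in Section 2.1, only recalls the construction underlying part (2), namely the multiplicative structure $\sdf$ on the resolution and the lifting $\Phi$.)

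Part (2) has a genuine gap at exactly the step you flag as the main obstacle, and the remedy you propose fails as stated. Let $\mathcal{G}$ be the minimal resolution, $\mathcal{D}$ its dual, and $\alpha\colon\mathcal{G}\to\mathcal{D}$ a comparison isomorphism, with components $\alpha_{i}\colon G_{i}\to G_{3-i}^{\ast}$ satisfying $\alpha_{1}\phi_{2}=\phi_{2}^{\ast}\alpha_{2}$. If you symmetrize, i.e.\ pass to $\beta=\tfrac{1}{2}(\alpha+\alpha^{\ast})$, then $\beta_{1}=\beta_{2}^{\ast}$, and the chain-map identity gives $\beta_{1}\phi_{2}=\phi_{2}^{\ast}\beta_{2}=\phi_{2}^{\ast}\beta_{1}^{\ast}=(\beta_{1}\phi_{2})^{\ast}$: the middle composite $G_{2}\to G_{2}^{\ast}$ comes out \emph{symmetric}, not alternating. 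To get an alternating map you need $\beta_{1}=-\beta_{2}^{\ast}$, i.e.\ the antisymmetrization $\tfrac{1}{2}(\alpha-\alpha^{\ast})$ --- and now the real issue appears: $\alpha$ and $\alpha^{\ast}$ both lift, up to units of $R/I$, the identification $R/I\cong\Ext^{3}_{R}(R/I,R)$, so $\alpha^{\ast}$ is homotopic to $\varepsilon\alpha$ for a suitable $\varepsilon$; whichever of $\alpha\mp\varepsilon\alpha^{\ast}$ is null-homotopic has, by minimality of both complexes, all its entries in $J$ and is therefore \emph{not} an isomorphism. So at most one of the symmetrization and the antisymmetrization can be an isomorphism of complexes, and deciding that it is the antisymmetric one (equivalently, that $\varepsilon=-1$ for grade $3$) is precisely the content of the theorem; your proposal never computes this sign. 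This is what the algebra structure of Buchsbaum--Eisenbud supplies: for $x,y\in F_{2}$ one has $x\cdot y\in F_{4}=0$, so the Leibniz rule gives $\de x\cdot y+x\cdot \de y=0$, while graded commutativity gives $x\cdot \de y=\de y\cdot x$, whence $\langle \de x,y\rangle=-\langle \de y,x\rangle$ for the perfect pairing $F_{1}\otimes F_{2}\to F_{3}\cong R$. That computation (not an abstract symmetrization) is what forces the alternating structure. Two further caveats: the theorem is stated over an arbitrary Noetherian local ring, where $2$ need not be a unit, so dividing by $2$ is not available in general (the Buchsbaum--Eisenbud construction avoids it and also settles the diagonal entries, i.e.\ alternating versus merely skew-symmetric); this is harmless for the paper's characteristic-zero application, but your argument does not prove the statement in the generality in which it is asserted.
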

Indeed, identifying $R$ with $\mathbb{K}[x_{0},x_{1},x_{2},x_{3}]$, an AG subscheme $Z$ as those arising in Theorem \ref{Beavu} satisfies the hypotheses of (2) in Theorem \ref{BucEis}: $Z$ has a Gorenstein homogenous ideal $I_{Z}$ by definition and by a theorem of Serre \cite{Bass}. The fact that $\depth(I_{Z},R)=3$ follows from \eqref{defGore} and $\hd(R/I_{Z})=3$, which is true since the homogeneous coordinate ring of a finite set of points is Cohen-Macaulay and from the Auslander-Buchsbaum formula \cite[ex. 18.15, ex. 19.8]{Eisenbud}.\\

Given $Z$ as in Theorem \ref{Beavu}, one can apply Theorem \ref{BucEis}: $I_{Z}$ is generated by the $(2d-2) \times (2d-2)$ principal Pfaffians extracted from a skew-symmetric $(2d-1)\times (2d-1)$ matrix $T$ with linear forms as entries. Then the surface admits a Pfaffian $\mathbb{K}$-representation
\begin{equation}
\label{facile}
\left(
\begin{array}{c|c}
T & -C^{t} \\
\hline C & 0
\end{array}
\right),
\end{equation}
where $C$ is a suitable $1 \times (2d-1)$ matrix with linear forms as entries, which can be found by formula \eqref{formulapfaff} (see also subsection \ref{analogo}).\\

In this paper we focus on case $d=3$. If $\mathbb{K}=\bar{\mathbb{K}}$, then by \cite{DavisGeramitaOrecchia} a set of five points in $\pp^{3}_{\mathbb{K}}$ is an AG scheme if and only if they are in general position, i.e. no four of them are on a plane. This fact, together with Theorem \ref{BucEis}, implies
\begin{coro} \label{coroler}
If $\mathbb{K}=\bar{\mathbb{K}}$, every smooth cubic surface in $\pp^{3}_{\mathbb{K}}$ admits a linear Pfaffian representation \cite{Beauville}.
\end{coro}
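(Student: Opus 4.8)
The plan is to exhibit a subscheme $Z$ satisfying condition (2) of Theorem \ref{Beavu} for $d=3$. Specializing the numerology to $d=3$: $Z$ must be a finite, reduced, arithmetically Gorenstein subscheme of degree $\tfrac16 d(d-1)(2d-1)=5$, of index $2d-5=1$, not contained in any surface of degree $d-2=1$, i.e. not contained in a plane. By the cited theorem of \cite{DavisGeramitaOrecchia}, over $\mathbb{K}=\Kb$ a set of five points is arithmetically Gorenstein exactly when it is in general position (no four coplanar); when this holds, "not contained in a plane" is automatic and the index is pinned down by a short Hilbert-function computation. So the whole matter reduces to producing five points in general position lying on the given smooth cubic surface $\esse\subset\pp^{3}$.

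First I would record that a smooth cubic surface $\esse$ is irreducible (a reducible or non-reduced hypersurface is singular along the locus where two components, or the reduced component with multiplicity, meet) and is non-degenerate (its degree exceeds $1$, so it is not contained in a plane); hence it is a two-dimensional variety, and any finite union of its hyperplane sections $\esse\cap H$ (each a proper closed subset, as $\esse\not\subset H$, hence a curve) fails to exhaust $\esse$. I would then build $Z$ one point at a time: pick $p_{1},p_{2},p_{3}\in\esse$ not collinear (possible since $\esse$ is not contained in a line); pick $p_{4}\in\esse$ off the plane $\langle p_{1},p_{2},p_{3}\rangle$; and pick $p_{5}\in\esse$ off the four planes spanned by the triples contained in $\{p_{1},\dots,p_{4}\}$. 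Each choice is possible by the previous remark, and by construction $Z:=\{p_{1},\dots,p_{5}\}$ is a reduced set of five points with no four coplanar.

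Next I would check the remaining hypotheses of Theorem \ref{Beavu}(2). By \cite{DavisGeramitaOrecchia}, $Z$ is arithmetically Gorenstein. Since the five points are not coplanar, no nonzero linear form vanishes on $Z$, so $\dim(R/I_{Z})_{0}=1$ and $\dim(R/I_{Z})_{1}=\dim R_{1}=4$; combined with the stable value $\dim(R/I_{Z})_{p}=5$ for $p\gg0$, the monotonicity of the Hilbert function of a finite set of points, and the symmetry \eqref{Hilsim} enjoyed by an AG scheme, this forces the index of $Z$ to be $N=1=2d-5$. Finally, $Z$ is not contained in a plane because not even four of its points are. Thus $Z$ fulfils condition (2) of Theorem \ref{Beavu}, and as $\esse$ is smooth it has no singular $\mathbb{K}$-points, so the theorem yields a linear Pfaffian $\mathbb{K}$-representation of $\esse$; since $\mathbb{K}=\Kb$ this is a linear Pfaffian representation. (Equivalently, one may feed $I_{Z}$ into Theorem \ref{BucEis}(2) to obtain a $5\times5$ alternating matrix of linear forms whose $4\times4$ principal Pfaffians generate $I_{Z}$, and then assemble the $6\times6$ representation as in \eqref{facile}.)

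The main obstacle here is not depth but bookkeeping: one must make sure that the three numerical constraints of Theorem \ref{Beavu} in the case $d=3$ (degree $5$, index $1$, non-degeneracy) are simultaneously met, and in particular that the general-position hypothesis of \cite{DavisGeramitaOrecchia} is both attainable on $\esse$ and strong enough to force all of them. The geometric content making the inductive point-selection valid is exactly the irreducibility and non-degeneracy of the smooth cubic surface, which guarantees that at each step one can choose a new point avoiding finitely many curves on $\esse$.
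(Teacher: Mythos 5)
Your proof is correct and follows the same route as the paper: the paper's own justification of Corollary \ref{coroler} is precisely the combination of the Davis--Geramita--Orecchia criterion (five points in general position are arithmetically Gorenstein over $\bar{\mathbb{K}}$) with Theorem \ref{BucEis} via the mechanism of Theorem \ref{Beavu}. You merely spell out the details the paper leaves implicit --- the existence of five points in general position on a smooth cubic and the verification that the index equals $2d-5=1$ --- and both of these verifications are sound.
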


This result has been generalized in \cite{FaniaMezzetti} as follows.
\begin{propo}
\label{ognisuperficie}
If $\mathbb{K}=\bar{\mathbb{K}}$, every cubic surface in $\pp^{3}_{\mathbb{K}}$ admits a linear Pfaffian representation.
\end{propo}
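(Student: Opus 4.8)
The plan is to split into two cases according to whether $F$ is reducible, and in each case to exhibit the datum that feeds the construction \eqref{facile}, namely a skew-symmetric matrix of linear forms whose Pfaffian is $F$ up to a nonzero constant. Since $\mathbb{K}=\Kb$, the surface $\esse=\Vi(F)$ has infinitely many $\mathbb{K}$-points and $\pp^{3}_{\mathbb{K}}=\pp^{3}_{\Kb}$, so no point-rationality issue arises and we may work set-theoretically with closed points.

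If $F$ is reducible I would write $F=L\cdot q$ with $L$ linear and $q$ quadratic (possibly reducible, possibly a square), and use that over the algebraically closed field $\mathbb{K}$ every quadratic form in four variables has a linear $2\times 2$ determinantal representation $q=\det N$: after diagonalizing $q$ one expresses it as a sum of at most two products of linear forms via $y^{2}+z^{2}=(y+\sqrt{-1}\,z)(y-\sqrt{-1}\,z)$, and a sum of two such products is a $2\times 2$ determinant of linear forms. Passing from this determinantal representation to a $4\times 4$ Pfaffian one as recalled in the Introduction, and juxtaposing it in a block-diagonal fashion with the $2\times 2$ skew block built from $L$, one obtains a $6\times 6$ skew-symmetric matrix of linear forms whose Pfaffian is $\pm F$. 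This disposes of all reducible cubics — three distinct planes, a double plane plus a plane, a triple plane, and a plane together with a (possibly singular) quadric.

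If $F$ is irreducible, then $\esse$ is an irreducible surface, of dimension two, contained in no plane, and I would produce a set $Z=\{p_{1},\dotsc,p_{5}\}\subset\esse$ of five points no four of which are coplanar by a greedy choice: $p_{1}$ arbitrary; $p_{2}\ne p_{1}$; $p_{3}$ off the line $\overline{p_{1}p_{2}}$ (possible since $\dim\esse=2$); $p_{4}$ off the plane $\langle p_{1},p_{2},p_{3}\rangle$ (possible since $\esse$ lies in no plane); finally $p_{5}$ outside the union of the four planes spanned by the triples in $\{p_{1},\dotsc,p_{4}\}$, which is possible because an irreducible surface contained in a finite union of planes is contained in one of them. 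A short direct verification then shows that no four of the $p_{i}$ are coplanar. By \cite{DavisGeramitaOrecchia}, $Z$ (with its reduced structure) is arithmetically Gorenstein of degree $5$, of index $1=2d-5$, and not contained in a surface of degree $d-2=1$, and $\depth(I_{Z},R)=3$ as explained after Theorem \ref{BucEis}; so Theorem \ref{BucEis}(2) yields a skew-symmetric $5\times 5$ matrix $T$ of linear forms whose $4\times 4$ principal Pfaffians $Q_{1},\dotsc,Q_{5}$ (which are quadrics) minimally generate $I_{Z}$. Since $F\in I_{\esse}\subseteq I_{Z}$, I would write $F=\sum_{i}\ell_{i}Q_{i}$ with $\ell_{i}$ linear and, expanding the Pfaffian along the last row via \eqref{formulapfaff}, conclude that the $6\times 6$ matrix $\bigl(\begin{smallmatrix} T & -C^{t}\\ C & 0 \end{smallmatrix}\bigr)$, with $C$ the row built from the $\ell_{i}$ with appropriate signs, has Pfaffian $F$; this is exactly the representation \eqref{facile} for $d=3$.

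The step I expect to be the main obstacle is the reducible, and especially the non-reduced, case: Theorem \ref{Beavu} is unavailable here (it assumes $\esse$ has no singular $\mathbb{K}$-points, and in any event for a triple plane every point of $\esse$ lies on one plane, so $\esse$ carries no five-point subscheme in general position), so the representation must be built by hand, the crucial input being the existence of a $2\times 2$ linear determinantal representation for every quadric over an algebraically closed field. In the irreducible case the only point that needs care is checking that the greedy construction never gets stuck — i.e.\ that each successive ``forbidden locus'' is a proper closed subset of $\esse$ — which at the last step genuinely uses irreducibility.
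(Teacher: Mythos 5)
Your argument is correct, and the final step (five points in general position, then \cite{DavisGeramitaOrecchia} and Theorem \ref{BucEis} to get the $5\times 5$ matrix $T$ and the bordered matrix \eqref{facile}) coincides with what the paper does; but the way you reach that step is genuinely different. The paper treats Proposition \ref{ognisuperficie} as a citation of \cite{FaniaMezzetti} and, in its own constructive results, splits the irreducible case into non-cones (handled by the tangent plane process of Theorem \ref{teorema}, with the T-point analysis of Lemma \ref{lemmaTpoints}) and cones (handled via points on the plane cubic and the lifting of Proposition \ref{suiconi}). You instead get the five points in general position by a greedy dimension count --- each forbidden locus (a line, a plane, a finite union of planes) meets the irreducible surface in a proper closed subset --- which works uniformly for cones and non-cones and needs no smoothness or T-point hypothesis. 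What you lose is exactly what the paper's heavier machinery is designed to provide: control over the field of definition of the points (your $p_{i}$ are merely closed points of $\pp^{3}_{\Kb}$, so the method says nothing over a non-closed $\mathbb{K}$, whereas the tangent plane process produces $\mathbb{K}$-points and yields Theorem \ref{mainthm}); what you gain is brevity and generality within the algebraically closed setting. In the reducible case both you and the paper build the representation by hand in blocks; the paper writes the quadric factor as a $4\times 4$ Pfaffian by bordering the $3\times 3$ matrix $\mathbb{T}'$ after moving a point of $\mathcal{S}$ to $[1:0:0:0]$, while you pass through a $2\times 2$ linear determinantal representation of the quadratic form (which exists over $\Kb$ for every rank) and the determinant-to-Pfaffian doubling; both are valid, and yours again exploits algebraic closure where the paper's version only needs one point on the quadric. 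Your verification that the greedy points are in general position, including the implicit check that no three of $p_{1},\dotsc,p_{4}$ are collinear so that the four planes are well defined, goes through.
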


We study how to construct \emph{explicitly} a linear Pfaffian $\mathbb{K}$-representation, where $\mathbb{K}$ is not necessarily algebraically closed, starting from the least amount of initial data possible. We will show that, in general, it is sufficient to know a $\mathbb{K}$-point on $\esse$.\\
Our contribution is the following: we prove %
\begin{thm}
\label{mainthm}
Let $\esse$ be a cubic surface, neither reducible nor a cone, whose equation is $F \in \mathbb{K}[x_{0},x_{1},x_{2},x_{3}]_{3}$. Given a $\mathbb{K}$-point $\gr{a^{1}}$, which is not a T-point --- in the sense of Definition \ref{Tpoint} --- it is possible to construct explicitly a linear Pfaffian $\mathbb{K}$-representation of $\esse$.
\end{thm}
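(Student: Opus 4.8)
The plan is to build the AG subscheme $Z\subset\esse\cap\pp^3_{\mathbb K}$ required by Theorem~\ref{Beavu} entirely over $\mathbb K$, using the given $\mathbb K$-point $\gr{a^1}$ as a seed, and then invoke Theorem~\ref{BucEis} together with the passage \eqref{facile} to produce the Pfaffian representation explicitly. Since $d=3$, the target $Z$ must be a finite reduced arithmetically Gorenstein scheme of index $2d-5=1$ and degree $\tfrac16 d(d-1)(2d-1)=5$, not lying on a plane; by \cite{DavisGeramitaOrecchia} this means five points of $\pp^3$ in general linear position (no four coplanar). So the whole problem reduces to: produce five such points on $\esse$, all of whose coordinates lie in $\mathbb K$ (i.e. each is a $\mathbb K$-point), starting from $\gr{a^1}$.

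The construction I would carry out is iterative and geometric. Given $\gr{a^1}\in\esse(\mathbb K)$ smooth (the hypothesis that $\esse$ is neither reducible nor a cone, plus $\gr{a^1}$ not being a T-point, should guarantee the smoothness and genericity we need at each stage), the tangent plane $T_{\gr{a^1}}\esse$ meets $\esse$ in a plane cubic curve with a singular point at $\gr{a^1}$; such a cubic is rational, hence admits a rational parametrization over $\mathbb K$, and we may read off a second $\mathbb K$-point $\gr{a^2}$ on it. Repeating this tangent-plane/residual-intersection idea — or, more robustly, intersecting $\esse$ with lines through the already-constructed $\mathbb K$-points (a line through two $\mathbb K$-points meets the cubic in a third point whose coordinate is the root of a degree-one polynomial after factoring out the two known roots) — one manufactures $\gr{a^1},\dots,\gr{a^5}\in\esse(\mathbb K)$ one at a time. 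At each step the Definition~\ref{Tpoint} hypothesis on $\gr{a^1}$ (whatever the precise ``T-point'' exclusion turns out to be) is exactly what one needs to keep the process from degenerating, i.e. to ensure the new point is distinct from the previous ones and the growing set stays in general position (no four coplanar). One then verifies directly that the $5\times 5$ matrix of the coordinates of $\gr{a^1},\dots,\gr{a^5}$, suitably arranged, has all $4\times 4$ minors nonzero, which is the general-position condition; if some four happen to be coplanar, a bounded replacement (pick a different point on the appropriate auxiliary rational curve) fixes it.

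Once $Z=\{\gr{a^1},\dots,\gr{a^5}\}$ is in hand: its homogeneous ideal $I_Z\subset\mathbb K[x_0,x_1,x_2,x_3]$ is Gorenstein with $\depth(I_Z,R)=\hd(R/I_Z)=3$ as explained in the excerpt, so part (2) of Theorem~\ref{BucEis} applies and gives a skew-symmetric $5\times 5$ matrix $T$ of linear forms over $\mathbb K$ whose $4\times 4$ principal Pfaffians generate $I_Z$; concretely, $T$ is obtained from a minimal free resolution of $R/I_Z$, which is computable by linear algebra over $\mathbb K$ (the resolution has the expected Betti numbers since $Z$ is five general points). Because $Z$ is not contained in any surface of degree $d-2=1$, Theorem~\ref{Beavu} then guarantees that $F$ is, up to a nonzero constant, the Pfaffian of the $6\times 6$ matrix \eqref{facile}, with the extra row $C$ found from \eqref{formulapfaff} by solving a linear system over $\mathbb K$; alternatively one can run the argument of Theorem~\ref{Beavu} directly on $Z$ via the Hartshorne–Serre bundle $\coker(M)$. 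Rescaling the constant gives a genuine linear Pfaffian $\mathbb K$-representation of $\esse$.

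The main obstacle I anticipate is the step of guaranteeing general position of the five constructed points over $\mathbb K$ while only ever solving equations that have $\mathbb K$-rational solutions — this is precisely the role of the ``not a T-point'' hypothesis, and the delicate part of the proof will be to show that for $\gr{a^1}$ outside that exceptional locus the iterative construction never gets stuck and the resulting $Z$ is reduced, five points, no four coplanar (so in particular one must control the auxiliary plane cubics and lines so that their relevant intersection points are simple and $\mathbb K$-rational). The reducibility/cone exclusions enter to rule out the degenerate surfaces for which these auxiliary curves could fail to be the expected rational curves; handling the remaining singular-but-irreducible cubics (non-normal ones, cubics with isolated rational double points) will require checking that a smooth $\mathbb K$-point always exists and that the tangent-hyperplane sections behave as in the smooth case, which is where I expect the bulk of the case analysis to go.
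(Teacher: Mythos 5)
Your overall strategy coincides with the paper's: reduce the theorem to producing five $\mathbb{K}$-points in general position on $\esse$, then pass to a bordered skew-symmetric matrix as in \eqref{facile} via the Buchsbaum--Eisenbud theorem (the paper packages this second half as Algorithm \ref{algo}, precomputing the matrix $\mathbb{T}$ of \eqref{ti} for the standard five points and transporting it by a projectivity over $\mathbb{K}$; your suggestion of computing a minimal free resolution of $I_Z$ over $\mathbb{K}$ directly is an acceptable variant). The genuine gap is in the first half. First, you assume the hypotheses force $\gr{a^{1}}$ to be smooth; they do not. By Definition \ref{Tpoint} a T-point is smooth by definition, so a singular $\mathbb{K}$-point is automatically not a T-point and is allowed by the statement (the paper's worked example starts from the singular point $[1:0:0:0]$). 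The singular case needs its own device --- the paper replaces $\TG_{\gr{a^{1}}}\esse$ by a general plane through $\gr{a^{1}}$, justified by Lemma \ref{lemmacono} and part 2 of Lemma \ref{lemmaTpoints} --- and your sketch has nothing in its place beyond a deferred ``case analysis''.

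Second, and more importantly, the heart of the theorem --- that the iteration can always be carried out over $\mathbb{K}$ and terminates with five points no four of which are coplanar --- is exactly what you leave as ``the delicate part''. The paper's Theorem \ref{teorema} and Lemma \ref{lemmaTpoints} supply this: at each step one must show that all but \emph{finitely many} lines through the current point inside its tangent plane yield a residual intersection that is a $\mathbb{K}$-point, smooth, again not a T-point (only finitely many T-points lie on the tangent plane, and the lines giving no residual point are at most the two lines of $\TG_{\gr{a}}\esse\cap\Vi(P_{1,\gr{a}})$), and avoids the lines cut out by the planes through previously chosen triples; at the fourth step one also needs $\gr{a^{2}}\notin\TG_{\gr{a^{4}}}\esse$, obtained by proving that system \eqref{sistemas} has finitely many solutions, which is what guarantees at the last step that $\pi_{134},\pi_{234},\pi_{124}$ cut $\TG_{\gr{a^{4}}}\esse$ in only three lines. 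Your ``bounded replacement fixes it'' is not a substitute for this count: without showing the bad locus on each tangent plane is a finite union of lines (so that, $\mathbb{K}$ being infinite, $\mathbb{K}$-rational good choices remain), the process could in principle get stuck, which is precisely what the non-T-point hypothesis is there to exclude, step after step, not only at $\gr{a^{1}}$. Finally, your appeal to Theorem \ref{Beavu} for the identity $\Pf(M)=cF$ is misplaced when $\esse$ has singular $\mathbb{K}$-points (allowed here); the correct and simpler argument is direct: $F\in I_Z$ gives $F=\sum_{i=1}^{5}(-1)^{i+1}L_iP_i$ with $L_i$ linear, and \eqref{formulapfaff} then shows that the bordered matrix \eqref{facile} has Pfaffian $F$.
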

\noindent The same method can be used to prove a weaker result, if $\gr{a^{1}}$ is not given:
\begin{propo}
\label{propouno}
Let $\esse$ be a cubic surface, neither reducible nor a cone, whose equation is $F \in \mathbb{K}[x_{0},x_{1},x_{2},x_{3}]_{3}$. Then it is possible to construct explicitly a Pfaffian $\mathbb{K'}$-representation of $\esse$, where $\mathbb{K'}$ is an algebraic extension of degree ${[\mathbb{K}':\mathbb{K}]\leq3}$.\\
Moreover, if $\mathbb{K}\subseteq \mathbb{R}$, then also $\mathbb{K}'$ can be chosen so.
\end{propo}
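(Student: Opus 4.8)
The plan is to deduce the statement from Theorem \ref{mainthm}: I would produce, over an explicit field extension $\mathbb{K}'$ of $\mathbb{K}$ with $[\mathbb{K}':\mathbb{K}]\le 3$, a $\mathbb{K}'$-point $\gr{a}$ of $\esse$ which is not a T-point in the sense of Definition \ref{Tpoint}, and then run the algorithm of Theorem \ref{mainthm} over $\mathbb{K}'$. This is legitimate because the hypotheses are stable under base change: whether $\esse\subset\pp^{3}_{\Kb}$ is reducible or a cone depends only on $\esse$ as a subvariety of $\pp^{3}_{\Kb}$, not on the field of definition of $F$, so $F$ regarded in $\mathbb{K}'[x_{0},x_{1},x_{2},x_{3}]_{3}$ still defines a cubic surface that is neither reducible nor a cone. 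Thus the whole task reduces to manufacturing such an $\gr{a}$.

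To do so I would intersect $\esse$ with a general line. Write $B\subseteq\esse$ for the subset consisting of the T-points together with any further points at which the construction of Theorem \ref{mainthm} fails; reading off the polynomial conditions of Definition \ref{Tpoint} shows that $B$ is Zariski-closed, and it is proper in $\esse$ --- this is the one point that genuinely uses the analysis underlying Theorem \ref{mainthm}, and it is consistent with Proposition \ref{ognisuperficie}. The lines $\ell\subset\pp^{3}_{\Kb}$ that are contained in $\esse$, or tangent to $\esse$ somewhere, or that meet $B$, form a proper closed subset of the Grassmannian $\mathbb{G}(1,3)$: $\esse$ is not a plane, so it carries no two-dimensional family of lines; the tangent lines form a hypersurface in $\mathbb{G}(1,3)$; and the lines meeting $B$ sweep out a subvariety of dimension $\le\dim B+2\le 3<4=\dim\mathbb{G}(1,3)$. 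Since $\mathbb{K}$ has characteristic zero it is infinite, and $\mathbb{G}(1,3)$ is rational, so $\mathbb{G}(1,3)(\mathbb{K})$ is Zariski-dense; hence --- testing these open conditions on finitely many trial lines --- one explicitly finds a $\mathbb{K}$-rational line $\ell$ with $\ell\not\subseteq\esse$, with $\ell\cap\esse$ reduced, and with $\ell\cap\esse\cap B=\emptyset$.

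Parametrizing $\ell$ by a $\mathbb{K}$-isomorphism $\pp^{1}_{\mathbb{K}}\cong\ell$ and restricting $F$ gives a nonzero binary cubic $g\in\mathbb{K}[s,t]_{3}$ whose three distinct roots in $\pp^{1}_{\Kb}$ are the points of $\ell\cap\esse$. Factoring $g$ into irreducibles over $\mathbb{K}$ --- an effective operation in characteristic zero --- each root lies on a single irreducible factor of degree $\le 3$, so its residue field is an extension $\mathbb{K}'$ with $[\mathbb{K}':\mathbb{K}]\le 3$; picking any root $\gr{a}$, it is a $\mathbb{K}'$-point of $\esse$ which, since $\ell\cap\esse\cap B=\emptyset$, is not a T-point. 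Applying Theorem \ref{mainthm} over $\mathbb{K}'$ to $\gr{a}$ then produces the (linear, hence Pfaffian) $\mathbb{K}'$-representation of $\esse$ required. For the last assertion, if $\mathbb{K}\subseteq\mathbb{R}$ then $g$ has real coefficients and therefore has a root in $\pp^{1}(\mathbb{R})$; choosing $\gr{a}$ to be that root makes $\mathbb{K}'$ a subfield of $\mathbb{R}$, still of degree $\le 3$, and the argument is otherwise unchanged.

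The step I expect to be the main obstacle is the properness of $B$: that $B$ is closed is mere bookkeeping from Definition \ref{Tpoint}, but the fact that not every point of $\esse$ is a T-point --- equivalently, that a general line meets $\esse$ in a usable point --- is precisely what must be extracted from the proof of Theorem \ref{mainthm}. Once that is in hand, what remains is a standard dimension count on $\mathbb{G}(1,3)$ together with the elementary remark that a univariate cubic over a field of characteristic zero has an irreducible factor of degree at most three, which can be taken to possess a real root as soon as the field is contained in $\mathbb{R}$.
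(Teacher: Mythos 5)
Your proposal is correct and follows essentially the same route as the paper: the paper obtains the point by intersecting $\esse$ with two arbitrary planes (i.e.\ a general $\mathbb{K}$-rational line), solving the resulting degree-three equation to get a point over an extension of degree at most three (real if $\mathbb{K}\subseteq\mathbb{R}$), and noting that for a general choice the point is not a T-point, so Theorem \ref{mainthm} applies. Your version merely makes explicit the genericity argument (T-points lie in a proper, at most one-dimensional subset by the analysis of Lemma \ref{lemmaTpoints}, plus the dimension count in $\mathbb{G}(1,3)$) and the harmless base-change remark, which the paper leaves implicit.
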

On one hand, these results strengthen one implication of Theorem \ref{Beavu} and give a bound for the degree of algebraic extension required to produce a linear Pfaffian representation. On the other hand, they are constructive: it is possible to implement an algorithm which produces a linear Pfaffian representation, provided the requested inputs.\\
After discussing the cases of reducible surfaces and cones, we are able to prove Theorem \ref{thmulti}, which strengthens Proposition \ref{ognisuperficie}.
\begin{thm}
\label{thmulti}
Every cubic surface in $\pp^{3}_{\bar{\mathbb{K}}}$, with equation $F \in \mathbb{K}[x_{0},x_{1},x_{2},x_{3}]_{3}$, admits a Pfaffian $\mathbb{K}'$-representation, $\mathbb{K}'$ being an algebraic extension of $\mathbb{K}$ of degree ${[\mathbb{K}':\mathbb{K}]}\leq6$.\\
Moreover, it is possible to explicitly realize such a representation.

\end{thm}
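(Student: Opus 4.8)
The plan is to distinguish three exhaustive cases for $\esse$ --- irreducible and not a cone, reducible over $\Kb$, and a cone --- and to treat each constructively while keeping track of the field extension produced. The first case needs no new argument: Proposition \ref{propouno} already yields, explicitly, a linear Pfaffian $\mathbb{K}'$-representation with $[\mathbb{K}':\mathbb{K}]\leq 3\leq 6$, and with $\mathbb{K}'\subseteq\mathbb{R}$ when $\mathbb{K}\subseteq\mathbb{R}$. So the content of the proof is the treatment of reducible surfaces and of cones, which are precisely the configurations excluded from Theorem \ref{mainthm} and Proposition \ref{propouno}, and it is here that the crude bound $6$ (rather than $3$) shows up.

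If $\esse$ is reducible I would factor $F$ over $\Kb$: the irreducible factors are either three (possibly repeated) linear forms, or one linear form and one irreducible quadratic form. Let $\mathbb{K}'$ be generated over $\mathbb{K}$ by suitably normalized factors. The Galois group permutes the at most three linear factors, so $[\mathbb{K}':\mathbb{K}]\leq|S_{3}|=6$ when $F$ splits into linear forms, whereas if an irreducible quadratic factor is present the linear factor is unique, hence $\mathbb{K}$-rational, and $\mathbb{K}'$ is obtained by a single quadratic extension. Over $\mathbb{K}'$ one then assembles a linear Pfaffian representation block-diagonally: a $2\times2$ skew-symmetric block with off-diagonal entry $L$ for each linear factor $L$, and, if an irreducible quadratic factor $Q$ occurs, an explicit $4\times4$ skew-symmetric matrix with Pfaffian $Q$ --- obtained by diagonalizing $Q$ over $\mathbb{K}$ and writing it, after adjoining at most one square root, as a combination of products of pairs of linear forms. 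Since the Pfaffian of a block-diagonal skew-symmetric matrix is the product of the Pfaffians of its blocks, the outcome has Pfaffian $F$ up to a scalar.

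If $\esse$ is a cone, its vertex is the singular locus, a $\mathbb{K}$-rational linear subspace, so a $\mathbb{K}$-linear change of coordinates makes $F$ involve at most three variables. If it involves at most two, $F$ splits into linear forms over $\Kb$ and we are back to the reducible case; otherwise $F=G(x_{0},x_{1},x_{2})$ with $G$ a plane cubic over $\mathbb{K}$, and it suffices to produce a linear Pfaffian representation of $G$, with entries not involving $x_{3}$. If $G$ is reducible this is again the reducible case; if $G$ is irreducible I would produce a linear $3\times3$ determinantal representation $\det A=G$ and pass to the $6\times6$ skew-symmetric matrix with off-diagonal blocks $A$ and $-A^{t}$, whose Pfaffian is $\pm\det A$. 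One such $A$ arises from any line bundle $\mathcal{L}$ of degree $3$ on $C=\Vi(G)$ with $\mathcal{L}\not\cong\odi{C}(1)$, through the resolution $0\to\odi{\pp^{2}}(-1)^{3}\to\odi{\pp^{2}}^{3}\to\mathcal{L}\to0$, which is valid because $\mathcal{L}(-1)$ is then a nontrivial line bundle of degree $0$ on the arithmetic-genus-one curve $C$, hence acyclic. Intersecting $C$ with a generic $\mathbb{K}$-rational line produces a point $\gr{p}\in C$, not a flex, defined over an extension of degree at most $3$; then $\odi{C}(3\gr{p})$ has degree $3$ and is not isomorphic to $\odi{C}(1)$, as required. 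All the steps involved --- factoring $F$, diagonalizing a quadratic form, intersecting with a line, computing the resolution of the resulting sheaf --- are effective, which supplies the explicit-realizability claim.

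The step I expect to be the main obstacle is the conical case: there is no hypothesis providing a usable $\mathbb{K}$-point, so one must descend from the geometry of the plane cubic $C$ --- its flexes, its genus-one Jacobian, its rational line sections --- to an \emph{explicit} linear determinantal representation while keeping the field of definition of degree at most $3$; checking that a generic line section avoids the flexes and that the resulting cokernel sheaf has exactly the cohomology needed to produce a matrix with the prescribed determinant is the delicate point. The reducible case is by comparison routine, the only care being the bookkeeping of the Galois action on the linear factors, which is what forces the bound $6$.
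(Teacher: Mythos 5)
Your proof is correct and uses the same three-case decomposition as the paper (irreducible non-cone via Proposition \ref{propouno}, reducible surfaces producing the degree-six bound, cones reduced to a plane cubic), but the two degenerate cases are handled differently, most notably the cones. For reducible surfaces the paper realizes the quadric factor as a $4\times 4$ Pfaffian by picking a point on it (one quadratic extension) and transforming the matrix $\mathbb{T}'$; you instead diagonalize the form and adjoin one square root, which does work, but only because the $4\times 4$ Pfaffian is a sum of \emph{three} products of linear forms: one binary piece of the diagonalized form must be split rationally into two products and only the remaining piece with the square root (two products cannot represent, say, a positive definite real form). For a cone over an integral plane cubic $C$, the paper finds one point of degree at most $3$ on $C$, multiplies points by the classical chord--tangent process until five of them have no three collinear, lifts them to five points in general position on the cone via Proposition \ref{suiconi}, and runs Algorithm \ref{algo}; you take a single smooth non-flex point $p$ of degree at most $3$, use $\mathcal{L}=\odi{C}(3p)$, whose twist $\mathcal{L}(-1)$ is a nontrivial degree-zero line bundle and hence acyclic (also on nodal and cuspidal cubics, where the flexes are still finitely many), obtain the linear resolution and a $3\times 3$ linear determinantal representation of $C$, and double it to a $6\times 6$ Pfaffian. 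Your route buys a leaner argument for cones --- one point instead of five, no genericity bookkeeping in the chord--tangent process --- at the price of a syzygy computation for $\mathcal{L}$ in place of the purely linear algebra that Algorithm \ref{algo} is designed to preserve; both routes give $[\mathbb{K}':\mathbb{K}]\leq 3$ for cones, so in both proofs the bound six is forced only by the case of three conjugate planes. The one point you should make explicit is that the generic $\mathbb{K}$-rational line must also avoid the singular point of $C$, so that $p$ is smooth and $\odi{C}(3p)$ is indeed a line bundle with the stated cohomology.
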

This paper is structured as follows: in section \ref{one}, we retrace the proof of Theorem \ref{BucEis} and we use it to construct a skew-symmetric matrix $\mathbb{T}$ as in \eqref{ti}, whose Pfaffians generate the ideal of the four fundamental points and the unit point in $\pp^{3}$. This enables us to produce Algorithm \ref{algo}, whose inputs are five points in general position on a surface $\esse$ and whose output is a linear Pfaffian representation of $\esse$.\\
In section \ref{two}, we make use of the tangent plane process, a classical argument (see, for example, \cite{Segre}); starting from a $\mathbb{K}$-point $\gr{a^{1}}$ on an irreducible surface which is not a cone, we show that it is always possible to find four other points on the surface such that all the five points are in general position, provided that $\gr{a^{1}}$ satisfies a mild condition.\\
In section \ref{three} we summarize the previous results in Theorem \ref{mainthm} and Proposition \ref{propouno}. Then we discuss the case of reducible surfaces and the case of cones, so to prove Theorem \ref{thmulti}. An example of the construction of a Pfaffian representation is finally given.

\section{From five points to a Pfaffian representation}
\label{one}
In this section, we make explicit the construction of the proof of Theorem \ref{BucEis}, in the particular case of the
ideal $I$ of the four fundamental points and the unit point
\begin{equation}
\label{ipunti}
[1:0:0:0],[0:1:0:0],[0:0:1:0],[0:0:0:1],[1:1:1:1]
\end{equation}
in $\pp^{3}_{\mathbb{Q}}$. This produces the skew-symmetric matrix $\mathbb{T}$ in \eqref{ti}, whose Pfaffians generate $I$; we will make use of $\mathbb{T}$ to implement Algorithm \ref{algo}, which produces a linear Pfaffian $\mathbb{K}$-representation of a cubic surface $\esse$ starting from five $\mathbb{K}$-points in general position on $\esse$.\\
From now on, we will consider only \emph{linear} Pfaffian representations.
\subsection{An explicit construction}
For the sake of completeness, we recall briefly the constructions made in \cite{BuchsbaumEisenbud} in the proof of Theorem \ref{BucEis}.

Let $R$ be the ring of polynomials $\mathbb{K}[x_{0},x_{1},x_{2},x_{3}]$ and let $I$ be a Gorenstein ideal with $\depth(I,R)=3$.
From a minimal free resolution of $I$\begin{equation}
\label{risdii}
\xymatrix@1{
\underline{F}: \qquad 0 \ar[r] & F_{3} \ar[r]^{\de_{3}} & F_{2} \ar[r]^{\de_{2}} & F_{1} \ar[r]^{\de_{1}} & F_{0} \ar[r] & R/I \ar[r] & 0
}\mbox{,}
\end{equation}
where $F_{0} \cong R \cong F_{3}$, it is possible to make a change of basis in $F_{1}$ such that the map $\mappa{F_{2}}{F_{1}}$ is alternating. This can be found by equipping this resolution with a graded commutative algebra, the symmetric square of $\underline{F}$
$$\sdf = {(\underline{F}\otimes \underline{F})}/{M}\mbox{,}$$
where $M$ is the graded submodule of $\underline{F}\otimes \underline{F}$ generated by the elements of the set
$$\left\{ f \otimes g - (-1)^{(\deg f)(\deg g)}g \otimes f \:|\: f,g \mbox{ homogeneous elements of \underline{F}}\right\}\mbox{.}$$
By convention, an element $f$ has degree $i$ if and only if it belongs to $F_{i}$; the degree of $(f \otimes g)$ is simply $\deg(f)+\deg(g)$. The differential is inherited from $\underline{F}$ as follows:
$$
\de(f\otimes g)= \de f \otimes g + (-1)^{\deg f}f \otimes \de g.
$$
The symmetric square $\sdf$ is a complex of projective $R$-modules, canonically isomorphic to $\underline{F}$ in degree 0 and 1. Therefore, there exists a map of complexes $\Phi:\mappa{\sdf}{\underline{F}}$ which lifts up these two isomorphisms and it can be chosen so that the restrictions of $\Phi$ to $F_{0} \otimes F_{k}$ are the isomorphisms $F_{0} \otimes F_{k} \cong F_k$.\\
The multiplication in $\sdf$ is given by $f \cdot g= \Phi(\overline{f \otimes g})$, where $\overline{f \otimes g}$ is the class of $f \otimes g$ modulo $M$. Since $F_{3} \cong R$, this multiplication induces a map $\mappa{F_{k}\otimes F_{3-k}}{R}$, which turns to be a perfect pairing.\\
This can be viewed as an isomorphism between $F_{1}$ and ${F_{2}}^{\ast}$, which makes the composition $\xymatrix@1{F_{2} \ar[r] & F_{1} \ar[r] & F_{2}^{\ast}}$ an alternating map.\\

Let us consider the special case where $I$ is the ideal of the points \eqref{ipunti}. We have the free resolution \eqref{risdii}, with $F_{1} \cong R^{5} \cong F_{2}$. We have to develop $\Phi_{3}:\mappa{\sdf_{3}}{F_{3}}$ in the diagram
\begin{equation}
\label{sollevamento}
\xymatrix{
\dotso \ar[r] & \sdf_{3} \ar[d]^{\Phi_{3}} \ar[r]^{d'_{3}} & \sdf_{2} \ar[d]^{\Phi_{2}} \ar[r]^{d'_{2}} & \sdf_{1} \ar[d]^{\Phi_{1}} \ar[r]^{d'_{1}} & \sdf_{0} \ar[d]^{\Phi_{0}} \ar[r]^{\pi} & R/I \ar[r] & 0 \\
0 \ar[r] & F_{3} \ar[r]^{d_{3}} & F_{2} \ar[r]^{d_{2}} & F_{1} \ar[r]^{d_{1}} & F_{0} \ar[r]^{\pi} & R/I \ar[r] & 0
}
\end{equation}
We choose the ordered basis of $\sdf_{2} \cong (F_{0}\otimes F_{2}) \oplus (\wedge^{2} F_{1})$ to be formed by the classes modulo $M$ of $  1 \otimes f^{2}_{1}, 1 \otimes f^{2}_{2}, \dotsc , 1 \otimes f^{2}_{5}, f^{1}_{1} \otimes f^{1}_{2}, f^{1}_{1} \otimes f^{1}_{3}, \dotsc, f^{1}_{4} \otimes f^{1}_{5}$, where the $f^{1}_{i}$s are a basis of $F_{1}$ and the $f^{2}_{j}$s are a basis of $F_{2}$. A similar convention is fixed for $\sdf_{3}\cong (F_{0}\otimes F_{3}) \oplus (F_{1} \otimes F_{2})$.\\

After a computation with \cite{CoCoA}, we consider the maps of diagram \eqref{sollevamento} to be
$$d_{3}=\aprimat{c}
x_0x_1-x_1x_3 \\
x_1x_2-x_2x_3 \\
-x_0x_2+x_1x_2 \\
-x_1x_3+x_2x_3 \\
x_0x_3-x_1x_3
\chiudimat\mbox{,} \qquad \qquad
{d_{1}}^{t}={d'_{1}}^{t}=\aprimat{c}
x_1x_3-x_2x_3 \\ x_0x_3-x_2x_3 \\ x_1x_2-x_2x_3 \\ x_0x_2 - x_2x_3 \\ x_0x_1 - x_2x_3
\chiudimat\mbox{,}
$$
$$d_{2}=\aprimat{ccccc}
-x_2&x_0&0&0&x_2\\
x_2&-x_1&x_1&0&0\\
x_3&-x_3&x_3&x_0-x_3&0\\
-x_3&x_3&0&-x_1+x_3&x_1\\
0&0&-x_3&0&-x_2
\chiudimat\mbox{.}
$$
The isomorphisms $\Phi_{0}$ and $\Phi_{1}$ are represented by identity matrices. With straightforward computations we get the matrices ${d'_{2}}$ and $d'_{3}$. By trials, we can lift up $\Phi_{1}$ by finding matrices $\Phi_{2}$ and $\Phi_{3}$ such that the diagrams
$$
\xymatrix{
&\sdf_{2} \ar[d]^{\Phi_{1} \circ d'_{2}} \ar
[dl]^{\Phi_{2}}&\\
F_{2} \ar[r]^{d_{2}} & \im(d_{2}) \ar[r] & 0
}
\qquad \qquad
\xymatrix{
&\sdf_{3} \ar[d]^{\Phi_{2} \circ d'_{3}} \ar
[dl]^{\Phi_{3}}&\\
F_{3} \ar[r]^{d_{3}} & \im(d_{3}) \ar[r] & 0
}
$$
commute. A possible choice for $\Phi_{2}$ is
\begin{equation*}
\aprimat{c|c}
I_{5} & \begin{array}{cccccccccc}
-x_3&x_1&0&0&x_3&x_3-x_0&x_3&0&-x_1&0\\
-x_3&0&-x_2&-x_1&0&-x_2&0&0&0&x_2\\
0&-x_2&-x_2&-x_1&-x_2&-x_2&-x_0&0&x_2&x_2\\
0&0&0&0&x_3&x_3&x_3&-x_2&-x_1&0\\
0&x_3&x_3&x_3&x_3&x_3&x_3&0&-x_1&-x_0
\end{array}
\chiudimat \mbox{.}
\end{equation*}
This choice is indeed the unique with linear forms as entries in the right block, since the syzygies are of degree two. The map $\Phi_{3}$ turns to be
$$
\renewcommand\arraycolsep{1.1mm}
\aprimat{c|c}
I_{1} & \begin{array}{ccccccccccccccccccccccccc}
0&0&0&-1&0&0&0&0&-1&1&0&1&0&0&0&0&1&-1&0&0&1&0&0&-1&0
\end{array}
\chiudimat \mbox{.}
$$

The isomorphism resulting from $\Phi_{3}$ is
$$
\xymatrix@1{{F_{1}} \ar[rrrrrr]^{\aprimat{ccccc}
0&0&0&0&1\\
0&0&1&1&0\\
0&0&0&-1&0\\
-1&-1&0&0&-1\\
0&1&0&0&0
\chiudimat} &&&&&&{{F_{2}}^{\ast}}}
$$
and, with respect to this change of basis, the map $d_{2}$ turns to be alternating, represented by the skew-symmetric matrix
\begin{equation}\label{ti}
\mathbb{T}=\aprimat{ccccc}
0&0&-x_3&0&-x_2\\
0&0&x_3&x_0-x_1&x_1\\
x_3&-x_3&0&x_1-x_3&-x_1\\
0&-x_0+x_1&-x_1+x_3&0&0\\
x_2&-x_1&x_1&0&0
\chiudimat\mbox{.}
\end{equation}
It is easy to verify that the $4 \times 4$ principal Pfaffians of $\mathbb{T}$ --- listed in \eqref{pfaffianiT} --- are exactly the five generators of $I$, that is, the entries of $d_{1}$.

\subsection{From five points to a Pfaffian representation: an algorithm}
\label{analogo}
The procedure just shown can be applied as long as we have the ideal of a set $X$ of five points in general position on a cubic surface $\esse$. Due to the classical fact that two sets of five points in general position in $\pp^{3}$ are projectively equivalent, instead of repeating the previous construction it is also possible to realize a Pfaffian representation in the following way.\\
By solving a linear system, we can find the matrix $A$ of the projectivity which maps $X$ to the five points \eqref{ipunti}. Replacing $x_{0},x_{1},x_{2},x_{3}$ in \eqref{ti} with the columns of the matrix $\left(\begin{array}{cccc}x_{0} & x_{1} & x_{2} & x_{3}\end{array}\right) \cdot A^{t}$, we get a matrix $T$ whose Pfaffians $P_{i}$ generate the ideal of $X$.\\
Finding a Pfaffian representation is then straightforward: if $\esse = \Vi(F)$, then $F$ belongs to the ideal of $X$. Therefore, one can find five linear forms $L_{i}$ such that $F=\sum_{i=1}^{5}(-1)^{i+1}L_{i}P_{i}$. Setting $C=\left(\begin{array}{ccccc} L_{1} & L_{2} & L_{3} & L_{4} & L_{5} \end{array} \right)$ and by \eqref{formulapfaff}, we get a Pfaffian representation of the form \eqref{facile}.\\
We summarize the whole procedure in Algorithm \ref{algo}
, presented in pseudocode, where $\mathbb{T}=\mathbb{T}(x_{0},x_{1},x_{2},x_{3})$ in \eqref{ti} is seen as a matrix depending on four variables, the Pfaffians of which are
\begin{gather}
\begin{gathered}
\label{pfaffianiT}
\Pf_{1}(\mathbb{T})(x_{0},x_{1},x_{2},x_{3})=x_1(x_0-x_3) \\
\Pf_{2}(\mathbb{T})(x_{0},x_{1},x_{2},x_{3})=x_{2}(x_3-x_1) \\
\Pf_{3}(\mathbb{T})(x_{0},x_{1},x_{2},x_{3})=x_{2}(x_1-x_0) \\
\Pf_{4}(\mathbb{T})(x_{0},x_{1},x_{2},x_{3})=x_3(x_1-x_{2}) \\
\Pf_{5}(\mathbb{T})(x_{0},x_{1},x_{2},x_{3})=x_3(x_0-x_1).
\end{gathered}
\end{gather}

\algsetup{indent=5em}
\begin{algorithm}[h!]
\caption{from five points in general position to a Pfaffian representation}
\label{algo}
\begin{algorithmic}[1]
\renewcommand{\zeta}{z}
\REQUIRE $F \in {\mathbb{K}[x_{0},x_{1},x_{2},x_{3}]}_{3}$ and $\gr{a^{1}},\gr{a^{2}},\gr{a^{3}},\gr{a^{4}},\gr{a^{5}}$ $\mathbb{K}$-points in general position on $\esse=\Vi(F)$
\ENSURE $M$, a Pfaffian $\mathbb{K}$-representation of $\esse$ depending on some arbitrary parameters $\alpha_{i,j}$
\STATE \CHOOSE a representative $\so{a^{i}}=(a^{i}_{0},a^{i}_{1},a^{i}_{2},a^{i}_{3}) \in \mathbb{A}^{4}_{\mathbb{K}}$ of $\gr{a^{i}}$ for every $1 \leq i \leq 5$
\STATE \COMPUTE the solution $\so{\lambda}=(\lambda_{1},\lambda_{2},\lambda_{3},\lambda_{4})$ of the linear system $$
\left(
\begin{array}{cccc}
a^{1}_{0} & a^{2}_{0} & a^{3}_{0} & a^{4}_{0} \\
a^{1}_{1} & a^{2}_{1} & a^{3}_{1} & a^{4}_{1} \\
a^{1}_{2} & a^{2}_{2} & a^{3}_{2} & a^{4}_{2} \\
a^{1}_{3} & a^{2}_{3} & a^{3}_{3} & a^{4}_{3}
\end{array}
\right)
\left(
\begin{array}{c}
\lambda_{1} \\
\lambda_{2} \\
\lambda_{3} \\
\lambda_{4}
\end{array}
\right)
=
\left(
\begin{array}{c}
a^{5}_{0} \\
a^{5}_{1} \\
a^{5}_{2} \\
a^{5}_{3}
\end{array}
\right)$$
\STATE \COMPUTE the change of basis matrix $A$ from $(\lambda_{i} a^{i})_{1\leq i \leq 4}$ to the standard basis of $\mathbb{A}^{4}_{\mathbb{K}}$, so that
$$
\lambda_{i} A
\left(
\begin{array}{c}
a^{i}_{0} \\
a^{i}_{1} \\
a^{i}_{2} \\
a^{i}_{3}
\end{array}
\right)=
\left(
\begin{array}{c}
\delta_{i}^{1}\\
\delta_{i}^{2} \\
\delta_{i}^{3} \\
\delta_{i}^{4}
\end{array}
\right) \qquad \mbox{for every } 1 \leq i \leq 4
$$

\FOR{$i=1$ \TO $4$}
	\STATE \SET $\zeta_{i-1}$ as the $i$-th row of the column vector $A \cdot \left(\begin{array}{c}x_{0} \\ x_{1} \\ x_{2} \\ x_{3}\end{array}\right)$
	\ENDFOR
\STATE \SET $\mathbb{T}(x_{0},x_{1},x_{2},x_{3})$ as in \eqref{ti}
\STATE \SET $T=\mathbb{T}(\zeta_{0},\zeta_{1},\zeta_{2},\zeta_{3})$
\FOR{$i=1$ \TO $5$} \STATE \label{cambiodd} \SET $P_{i}=\Pf_{i}(\mathbb{T})(\zeta_{0},\zeta_{1},\zeta_{2},\zeta_{3})$ as in \eqref{pfaffianiT}
\STATE \SET $L_{i}=\sum_{j=0}^{3}\alpha_{i,j}x_{j}$
\ENDFOR
\STATE \SET $G=F-\sum_{i=1}^{5}(-1)^{i+1}L_{i}P_{i}$
\STATE \label{trova} \COMPUTE solutions of the linear system given by equaling the coefficients of $G$ to zero, $\alpha_{i,j}$ as unknowns
\STATE \SUBSTITUTE the solutions in $L_{i}$
\STATE \SET $M$ as the matrix $$\left( \begin{array}{c|c}
T &
\begin{array}{c}
L_{1} \\ L_{2} \\ L_{3} \\ L_{4} \\ L_{5}
\end{array}
\\
\hline 
\begin{array}{ccccc}
-L_{1} & -L_{2} & -L_{3} & -L_{4} & -L_{5}
\end{array}
& 0
\end{array} \right)$$
\end{algorithmic}
\end{algorithm}
\begin{rmk}
\label{remarco}
Algorithm \ref{algo} involves only linear equations. If the five given points are $\mathbb{K}$-points, as well as $F \in \mathbb{K}[x_{0},x_{1},x_{2},x_{3}]_{3}$, then the output Pfaffian representation of $\esse=\Vi(F)$ is a $\mathbb{K}$-representation too, for a suitable choice of the representatives of the points.
\end{rmk}

\begin{rmk}
\label{fivedime}
The matrix associated to the (non-homogeneous) linear system in line \ref{trova} of the algorithm is $20 \times 20$; it depends only on the projectivity applied in line \ref{cambiodd}, not on the choice of $F$. Regardless to this projectivity, its rank is $15$.\\
Since for any choice of $F \supset \{\gr{a^{1}},\gr{a^{2}},\gr{a^{3}},\gr{a^{4}},\gr{a^{5}}\}$ a solution of this linear system does exist, the ``Pfaffian representation depending on some parameters'' ensured by Algorithm \ref{algo} turns to be a five-dimensional linear space of Pfaffian representations.
\end{rmk}

\subsubsection{Classes of equivalent representations}

We recall that two Pfaffian $\mathbb{K}$-representations $M$ and $M'$ are said to be equivalent if and only if there exists $X \in \GL_{\mathbb{K}}(6)$ such that $M'=XMX^{t}$. Let $\coker(M)$, $\coker(M')$ be the cokernel sheaves of $M,M'$, seen as maps $\mappa{\odi{\pp^{3}}^{6}(-1)}{\odi{\pp^{3}}^{6}}$. Then from \cite[(2.3)]{Beauville} it follows that $M,M'$ are equivalent if and only if $\coker(M) \cong \coker(M')$.\\
In this way the study of equivalence classes of Pfaffian representations of a cubic surface $\esse$ is strongly linked to the study of certain sheaves on $\esse$.\\

\begin{rmk}
\label{tutteequiv}
Let $Z$ be a fixed set of five points in general position on a surface $\esse$ without singular $\mathbb{K}$-points and consider the Pfaffian representations given by Algorithm \ref{algo}, which are a five-dimensional linear space by Remark \ref{fivedime}. It turns out that all these representations are equivalent. Indeed, by \cite[(7.1)]{Beauville}, up to automorphism there exists only one pair $(E,s)$, with $E$ rank-two vector bundle on $\esse$ and $s \in \Hh^{0}(\esse,E)$, such that $Z$ is the zero locus of $s$. In addition, these classes of pairs $[(E,s)]$ are in bijection with the equivalence classes of the pairs $[(M,\bar{s})]$, where $E=\coker(M)$ and $\bar{s} \in \Hh^{0}(\pp^{3},\odi{\pp^{3}}^{6})$ corresponds to $s$ via the isomorphism $\Hh^{0}(\pp^{3},\odi{\pp^{3}}^{6}) \cong \Hh^{0}(\esse,E)$. It follows that all the representations produced in the algorithm belong to a unique equivalence class.
\end{rmk}

\noindent It is worth noting that, as $Z$ varies among the possible sets of five $\mathbb{K}$-points in general position on a surface $\esse$ without singular $\mathbb{K}$-points, Algorithm \ref{algo} is surjective onto the possible Pfaffian $\mathbb{K}$-representations of $\esse$, and therefore onto their equivalence classes. Indeed, as shown in \cite[(7.2)]{Beauville}, a general global section of $E=\coker(M)$ has five points in general position as its zero locus $Z$ and therefore $M$ can be produced via the algorithm with input $Z$.\\

In \cite{Buckley}, elementary transformations were used to construct non-equivalent Pfaffian representations of curves starting from a given one. This technique can be used in the case of surfaces as well.
\begin{rmk}
\label{neaspetto5}
The bijection between Pfaffian representations $M$ and sheaves $E=\coker(M)$ tells us more, when dealing with the algebraic closure $\Kb$.\\
Let $\mathscr{S}^{3}/\GL(6)$ be the set of equivalence classes of the $6 \times 6$ skew-symmetric matrices of linear forms in $\pp_{\Kb}^{3}$; let $\pf:\mappa{\mathscr{S}^{3}/\GL(6)}{|\odi{\pp^{3}_{\Kb}}(3)|}$ be the map which associates to a class $[M]$ the cubic surface in $\pp^{3}_{\Kb}$ with equation $\Pf(M)$. As noticed in \cite{FaniaMezzetti}, for the general $\esse$ the fiber $\pf^{-1}(\esse)$ can be identified with an open subset of the moduli space of simple rank-two vector bundles $E$ on $\esse$ with $c_{1}(E)=\odi{\esse}(2)$,  $c_{2}(E)=5$.\\
Since the (projective) dimension of $\mathscr{S}^{3}/\GL(6)$ is $59-35=24$ and the dimension of $|\odi{\pp^{3}_{\Kb}}(3)|$ is $19$, then for the general $\esse$ we have a five-dimensional space of essentially different Pfaffian $\Kb$-representations of $\esse$.
\end{rmk}
%

The space $\mathscr{S}^{3}/\GL(6)$ has been recently considered in \cite{Han}, in relation to the space of pairs $(\esse, \Pi)$, being $\Pi$ a complete pentahedron inscribed in $\esse$.

\section{Constructing five points on a surface}
\label{two}
Given an equation $F \in \mathbb{K}[x_{0},x_{1},x_{2},x_{3}]_{3}$, in general it is not easy to find a zero of $F$ in $\mathbb{A}^{4}_{\mathbb{K}}$. For example, if $\mathbb{K}=\mathbb{Q}$, the problem of the existence of rational points on cubic surfaces, reliable to diophantine equations, has been strongly faced in the last century (see, for example, \cite{Mordell}, \cite{Segre} and the more recent \cite{ManinCub}).\\
Our next aim is to weaken the required inputs of Algorithm \ref{algo}.
\subsection{From one point to five points}
It is well known that from a \emph{general} choice of a $\mathbb{K}$-point on a \emph{general} cubic surface with equation in $\mathbb{K}[x_{0},x_{1},x_{2},x_{3}]_{3}$ it is possible to find infinitely many others $\mathbb{K}$-points on the surface; this can be performed by using the tangent plane process, a classical argument (for example, see \cite{Segre}). It starts by taking the tangent plane to the cubic surface $\esse$ at a smooth point $P$. $\TG_{P}\esse$ cuts $\esse$ in a curve of degree three, for which $P$ is a singular point. A line through $P$, lying on the tangent plane, intersects $\esse$ twice in $P$, while the third intersection is \emph{generically} different and gives us another $\mathbb{K}$-point on $\esse$.\\
We want to get rid of this ``generality''. Theorem \ref{teorema} will show how, under reasonable hypotheses, the tangent plane process applied to a starting $\mathbb{K}$-point can be repeated to produce four other $\mathbb{K}$-points on $\esse$, such that the five points are all together in general position. This will prove, under these hypotheses, that we only need a $\mathbb{K}$-point on $\esse$ to construct an explicit Pfaffian $\mathbb{K}$-representation.
\begin{defn}
\label{Tpoint}
A point $P \in \esse$ will be called a T-point for $\esse$ if $P$ is smooth for $\esse$ and $\TG_{P}\esse \cap \esse$ is set-theoretically union of lines.
\end{defn}
Let us observe that the so-called Eckardt points, i.e. smooth points $P$ with $\TG_{P}\esse \cap \esse$ made up of three lines through $P$, are T-points. Moreover, a smooth points $P$ is a T-point if and only if $\TG_{P}\esse$ is a tritangent plane.\\
In general, for a T-point $P$ one expects $\TG_{P}\esse \cap \esse$ to be union of three distinct lines, but it is possible to have one line with multiplicity three or two lines, one of them with multiplicity two.\\
The role of T-points will be clear in a while. Let us remark that, for a smooth point $P$ which is not a T-point, $\TG_{P}\esse \cap \esse$ is either an irreducible cubic curve with $P$ as a singular point, or union of a line through $P$ and a smooth conic passing through $P$.
\begin{rmk}
\label{surette}
Let $P$ be a T-point for $\esse$. If $\TG_{P}\esse \cap \esse$ is a line $r$ with multiplicity three, or union of a line $r$ with multiplicity two and another line, then $r$ is union of singular points for $\esse$ and T-points for $\esse$ sharing the same tangent plane.
\end{rmk}
\begin{thm}
\label{teorema}
Let $\esse$ be an irreducible cubic surface which is not a cone, whose equation is $F \in \mathbb{K}[x_{0},x_{1},x_{2},x_{3}]_{3}$. Given a $\mathbb{K}$-point $\gr{a^1}$ on $\esse$ which is not a T-point --- in the sense of Definition \ref{Tpoint} --- it is possible to explicitly construct four other $\mathbb{K}$-points on $\esse$ such that the five points together are in general position.
\end{thm}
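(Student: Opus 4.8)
The plan is to iterate the tangent plane process --- or, when $\gr{a^1}$ is a singular point of $\esse$, the more flexible ``line through a double point'' variant --- producing the four new points one at a time and, at each stage, choosing the auxiliary line among the infinitely many $\mathbb{K}$-rational ones so as to dodge finitely many degenerate configurations. Write $P_1:=\gr{a^1}$.

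First I would record why the construction never leaves $\mathbb{K}$. Let $P$ be a $\mathbb{K}$-point of $\esse$ with representative $\so p\in\mathbb{A}^4_{\mathbb{K}}$, pick $\so v\in\mathbb{K}^4$, and set $\ell=\{[\lambda\so p+\mu\so v]\}$. Since $F(\so p)=0$ one has $F(\lambda\so p+\mu\so v)=\lambda^2\mu\,(\nabla F(\so p)\cdot\so v)+\lambda\mu^2\,Q(\so p,\so v)+\mu^3F(\so v)$, with $Q$ quadratic in $\so v$; if $P$ is singular, or if $P$ is smooth and $\ell\subset\TG_P\esse$, then the coefficient of $\lambda^2\mu$ vanishes and this binary cubic is $\mu^2(A\lambda+B\mu)$ with $A=Q(\so p,\so v),\ B=F(\so v)\in\mathbb{K}$. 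When $A\neq0$ its third root is the $\mathbb{K}$-point $[A\so v-B\so p]\neq P$. Since $A$ is quadratic in $\so v$, it vanishes on a proper closed subset of the lines through $P$; and $A\not\equiv0$ as long as $\esse$ is not a cone with vertex $P$ --- in the smooth case, $A\equiv0$ would force $P$ to be a triple point of $\TG_P\esse\cap\esse$, i.e.\ a T-point. So off a proper closed subset of the pencil (resp.\ net) of lines through $P$, the process delivers a genuine new $\mathbb{K}$-point, computable by solving a linear equation.

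Next comes the iteration. If $\gr{a^1}$ is singular, projection from $\gr{a^1}$ is birational onto $\pp^2$ ($\esse$ being irreducible and not a cone), so the third-point map on the net of $\mathbb{K}$-lines through $\gr{a^1}$ dominates $\esse$, and four sufficiently general such lines give $P_2,\dots,P_5$ with the five points in general position --- all the non-coincidence and non-coplanarity conditions being closed of positive codimension. So assume $\gr{a^1}$ smooth; by hypothesis it is not a T-point. I would then apply the tangent plane process successively at $P_1,P_2,P_3,P_4$ to obtain $P_2,P_3,P_4,P_5$. Because $\mathbb{K}$ has characteristic zero it is infinite, so each relevant $\pp^1_{\mathbb{K}}$ of lines has infinitely many points, and it suffices to check that only finitely many are forbidden at each step. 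When $P_{k+1}$ is built from a line $\ell_k\subset\TG_{P_k}\esse$ through $P_k$, the point $P_{k+1}$ sweeps a component $\Gamma_k$ of $\TG_{P_k}\esse\cap\esse$; since $P_k$ is a smooth non-T-point, $\Gamma_k$ is an \emph{irreducible} curve --- an irreducible plane cubic or a smooth conic --- through $P_k$. I must keep $P_{k+1}$ away from: the inflectional directions at $P_k$ and the at most one line of $\TG_{P_k}\esse\cap\esse$ lying on $\esse$ (finitely many $\ell_k$, using that $P_k$ is not a T-point); the singular locus and the T-point locus of $\esse$ (which $\Gamma_k$ meets finitely, since both avoid $P_k\in\Gamma_k$); the earlier points $P_1,\dots,P_k$; the planes spanned by triples of earlier points; and --- imposed with foresight --- the locus of $Q\in\esse$ for which $\TG_Q\esse$ contains two of $P_1,\dots,P_k$, which has to be finite along $\Gamma_k$ for the coplanarity conditions at the \emph{next} step to remain finitely forbidden. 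Each of these is a proper closed subset of $\esse$ of dimension at most one, and $\Gamma_k$ lies in none of them: it is contained in no plane other than $\TG_{P_k}\esse$, and $\TG_{P_k}\esse$ cannot coincide with a plane through three earlier points once $P_k$ itself avoids that plane --- precisely the foresight condition, pushed one step back; the polar-type loci $\{Q:P_i,P_j\in\TG_Q\esse\}$ are honest curves, not components of $\esse$, because $\esse$ is irreducible and not a cone. Hence at each step only finitely many $\ell_k$ are excluded, a valid choice exists, and after four steps $P_1=\gr{a^1},P_2,\dots,P_5$ are five explicitly constructed $\mathbb{K}$-points in general position.

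The hard part, I expect, is threading the coplanarity requirement through the four consecutive choices: to keep ``$P_{k+1}\notin\langle P_i,P_j,P_m\rangle$'' finitely forbidden one needs $\Gamma_k\not\subset\langle P_i,P_j,P_m\rangle$, hence $\TG_{P_k}\esse\neq\langle P_i,P_j,P_m\rangle$, a constraint to be secured already when $P_k$ was produced; making this nested cascade of genericity conditions simultaneously satisfiable --- and verifying that the polar-type loci on $\esse$ are curves rather than surface components, which is exactly where irreducibility and the non-cone hypothesis enter --- is the real obstacle. Everything else reduces to finiteness counts on a rational curve over an infinite field.
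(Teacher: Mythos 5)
Your overall strategy --- iterating the tangent plane process, excluding finitely many bad lines at each step, and imposing a ``foresight'' condition so that later tangent planes avoid pairs of earlier points --- is the same as the paper's (your direct treatment of a singular $\gr{a^1}$ by taking four general lines through the double point is a harmless variant of the paper's reduction to the smooth case via a general plane). The genuine gap lies in how you justify the finiteness claims on which every step rests. For the foresight loci you argue that $\Gamma_k$ meets $\{Q\in\esse:\ P_i,P_j\in\TG_{Q}\esse\}$ in finitely many points because these loci ``are honest curves, not components of $\esse$.'' That is not sufficient: $\Gamma_k$ is itself a curve on $\esse$, and nothing you say excludes that $\Gamma_k$ is a \emph{component} of such a polar-type curve, in which case every choice of $P_{k+1}$ at that step would be forbidden. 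What is actually needed --- and what the paper proves, via Lemma \ref{lemmaTpoints} and the Claim about system \eqref{sistemas} --- is that these loci do not contain $\Gamma_k$: for a pair $\{i,j\}$ with $i,j<k$ this follows because the foresight imposed when $P_k$ was chosen guarantees the locus misses $P_k$, and $\Gamma_k$ is irreducible through $P_k$; for a pair containing $P_k$ one must instead use that $\Vi(P_{1,P_k})\cap\TG_{P_k}\esse$ is a union of at most two lines through $P_k$ (because $P_k$ is not a T-point), hence cannot contain the irreducible conic or cubic $\Gamma_k$. Your proposal supplies neither argument, and the reason you quote does not substitute for them.

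A second, smaller thin spot: your finiteness of T-points along $\Gamma_k$ (``which $\Gamma_k$ meets finitely, since both avoid $P_k$'') silently uses that the T-point locus is closed in the smooth locus of $\esse$; an arbitrary subset avoiding $P_k$ can of course meet $\Gamma_k$ infinitely often. Closedness is true (the set of planes cutting $\esse$ in a union of lines is closed in the dual space, and one pulls it back by the Gauss map), but it must be stated and proved; the paper instead shows directly that $\TG_{P}\esse$ carries only finitely many T-points, and this is exactly where the non-obvious work sits --- one must rule out one-dimensional families of T-points, which forces the analysis of cubics containing infinitely many lines (the ruled cubics with a double line, Tables \ref{tabella0} and \ref{tabella}). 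Finally, at the step producing $P_3$ you never exclude the line through $P_1,P_2$, although a collinear triple destroys general position just as a coplanar quadruple does; this exclusion is automatic because $P_1\notin\TG_{P_2}\esse$, but it has to be observed.
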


\noindent The constructive proof, which requires some steps and preliminary lemmas, will be the subject of next subsection. In subsection \ref{commenti} we will see how this construction can be adapted if some of the hypotheses are missing.
\subsection{
}
Let us consider $F=F(x_{0},x_{1},x_{2},x_{3}) \in \mathbb{K}[x_{0},x_{1},x_{2},x_{3}]_{3}$. Then we set, for every $\so{a}=(a_{0},a_{1},a_{2},a_{3}) \in \mathbb{A}^{4}_{\Kb}$:
\begin{itemize}
\item $P_{1,\so{a}}(x_{0},x_{1},x_{2},x_{3})=\sum_{i=0}^{3}a_{i} \frac{\partial F}{\partial x_{i}}$;
\item $P_{2,\so{a}}(x_{0},x_{1},x_{2},x_{3})=\sum_{i=0}^{3}x_{i}\frac{\partial F}{\partial x_{i}}(\so{a})$.
\end{itemize}
They are the equations of the first polar and the second polar of $\gr{a}={[a_{0}:a_{1}:a_{2}:a_{3}]}$ with respect to the surface $\esse=\Vi(F)$. If $\gr{a}$ is smooth, $P_{2,\so{a}}$ defines $\TG_{\gr{a}}\esse$.\\
If $\so{x}=(x_{0},x_{1},x_{2},x_{3})$, for every $\so{a} \in \mathbb{A}^{4}_{\Kb}$ we have:
\begin{equation}
\label{sviluppo}
F(\so{a}+t\so{x})=F(\so{a})+tP_{2,\so{a}}(\so{x})+t^{2}P_{1,\so{a}}(\so{x})+t^{3}F(\so{x}).
\end{equation}
We will consider the first and the second polar $\Vi(P_{1,\gr{a}})$ and $\Vi(P_{2,\gr{a}})$, for $\gr{a} \in \pp^{3}_{\Kb}$, as hypersurfaces in $\pp^{3}_{\Kb}$.
\begin{lem}
\label{lemmacono}
Let $\gr{a}$ be a singular point on a cubic surface $\esse$, whose equation is $F \in \mathbb{K}[x_{0},x_{1},x_{2},x_{3}]_{3}$. Let us assume that $\esse$ is neither reducible, nor a cone. Then there are at most six lines through $\gr{a}$ lying on $\esse$.
\begin{proof}
By \eqref{sviluppo}, if a point $\gr{x} \in \esse \cap \Vi(P_{1,\gr{a}})$, also the whole line through $\gr{a}$ and $\gr{x}$ does. $P_{1,\gr{a}}$ is not the zero polynomial since $\esse$ is not a cone, moreover $F$ is irreducible: this means that the intersection $\esse \cap \Vi(P_{1,\gr{a}})$ is transversal. It is therefore a curve of degree six, union of lines through $\gr{a}$.
\end{proof}
\end{lem}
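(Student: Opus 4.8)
The plan is to combine the expansion \eqref{sviluppo} with the singularity of $\gr{a}$ and a B\'ezout count. Since $\gr{a}$ is a singular point of $\esse$, all the first partial derivatives of $F$ vanish at $\so{a}$, so $P_{2,\so{a}} \equiv 0$; also $F(\so{a}) = 0$ because $\gr{a} \in \esse$. Substituting into \eqref{sviluppo} gives
\[
F(\so{a} + t\so{x}) = t^{2}P_{1,\so{a}}(\so{x}) + t^{3}F(\so{x}).
\]
Hence, for $\gr{x} \neq \gr{a}$, the line joining $\gr{a}$ and $\gr{x}$ is contained in $\esse$ precisely when $F(\so{a}+t\so{x})$ vanishes identically in $t$, that is, when $P_{1,\so{a}}(\so{x}) = 0$ and $F(\so{x}) = 0$; in other words, exactly when $\gr{x}$ lies on the common zero scheme $C := \esse \cap \Vi(P_{1,\so{a}})$ of $\esse$ and the first polar.

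Next I would show that $C$ is a genuine curve of degree six. If $P_{1,\so{a}}$ vanished identically, the displayed identity would read $F(\so{a}+t\so{x}) = t^{3}F(\so{x})$, so every line joining $\gr{a}$ to a point of $\esse$ would lie on $\esse$, and $\esse$ would be a cone with vertex $\gr{a}$ — contradicting the hypothesis. Therefore $\Vi(P_{1,\so{a}})$ is an honest quadric surface. Since $F$ is irreducible of degree three and $P_{1,\so{a}}$ has degree two, $F$ does not divide $P_{1,\so{a}}$, so $\esse \not\subseteq \Vi(P_{1,\so{a}})$ and the intersection $C$ is proper; by the projective dimension theorem every component of $C$ has dimension one, and by B\'ezout's theorem $\deg C = 3 \cdot 2 = 6$.

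Finally, each line $\ell$ through $\gr{a}$ lying on $\esse$ is a one-dimensional component of $C$: for every $\gr{y} \in \ell$ with $\gr{y} \neq \gr{a}$ the line through $\gr{a}$ and $\gr{y}$ is $\ell$ itself, which lies on $\esse$, so $P_{1,\so{a}}(\so{y}) = 0$ as well; hence $\ell \subseteq C$, and being irreducible of dimension one it is a whole component. Distinct such lines give distinct components of the degree-six curve $C$, so there can be at most six of them. No step here is genuinely difficult; the only points deserving a little care are the non-vanishing of the first polar $P_{1,\so{a}}$, which is precisely where the hypothesis that $\esse$ is not a cone enters, and the properness of $\esse \cap \Vi(P_{1,\so{a}})$, which is immediate from the irreducibility of $F$ together with a comparison of degrees; after that the bound is just B\'ezout.
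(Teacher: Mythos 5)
Your argument is correct and follows essentially the same route as the paper: the expansion \eqref{sviluppo} at the singular point, the non-vanishing of $P_{1,\so{a}}$ via the non-cone hypothesis, the properness of $\esse \cap \Vi(P_{1,\so{a}})$ via irreducibility of $F$, and the degree-six count. You merely spell out the details the paper leaves implicit (that $P_{2,\so{a}}\equiv 0$ at a singular point, and the B\'ezout bookkeeping identifying each line on $\esse$ through $\gr{a}$ as a component of the degree-six curve).
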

\begin{lem}
\label{lemmaTpoints}
Let $\esse$ be an irreducible, cubic surface which is not a cone and let us assume $\gr{a} \in \esse$ is not a T-point.
\begin{enumerate}
\item If $\gr{a}$ is smooth, then on $\TG_{\gr{a}}\esse$ there are only finitely many T-points for $\esse$. Moreover $\Vi(P_{1,\gr{a}}) \cap \TG_{\gr{a}}\esse$ is union of at most two lines through $\gr{a}$ and any line through $\gr{a}$ lying on $\esse$ lies also on $\Vi(P_{1,\gr{a}}) \cap \TG_{\gr{a}}\esse$.
\item If $\gr{a}$ is singular, then point 1. still holds if we replace $\TG_{\gr{a}}\esse$ with a plane $\pi$ through $\gr{a}$, for all but finitely many choices of $\pi$.
\end{enumerate}
\begin{proof}
We distinguish two classes of T-points: let us call $\mathcal{A}$ the set of T-points $P$ for which $\TG_{P}\esse \cap \esse$ is union of three distinct lines, $\mathcal{A}'$ the set of T-points not in $\mathcal{A}$.\\
Either $\esse$ contains finitely many lines or infinitely many ones. In the first case, note that $\mathcal{A}$ is a finite set, since mutual intersections of lines on $\esse$ are finite in number; $\mathcal{A'}$ is contained in a union of lines on $\esse$, by Remark \ref{surette}.\\
\noindent If $\esse$ contains infinitely many lines, then it is well-known (for example, \cite{Conforto}) that $\esse$ is either reducible, an irreducible cone or a ruled cubic with a double line. By hypotheses the first two cases cannot occur.\\
Moreover, a cubic surface with a double line which is not a cone is projectively equivalent to either ${\Vi(x_{0}^{2}x_{3}^{}+x_{0}^{}x_{1}^{}x_{2}^{}+x_{1}^{3})}$ or ${\Vi(x_{0}^{2}x_{2}^{}+x_{1}^{2}x_{3}^{})}$ (see, for example, \cite{Abhyankar}). The study of these two cases leads to Table \ref{tabella0} and Table \ref{tabella}.\\
\begin{table}[hbt]
\centering
\begin{tabular}{cc}
\toprule
coordinates of $\gr{a}$ & $\TG_{\gr{a}}\esse \cap \esse$ (if smooth)\\
\toprule
$[1:s:t:-s^{3}-st]$ & $\left\{\begin{array}{l}
x_{0}(-2s^{3}-st)+x_{1}(3s^{2}+t)+sx_{2}+x_{3}=0 \\
(x_{0}s-x_{1})(2x_{0}^{2}s^{2}-x_{0}x_{1}s+tx_{0}^{2}-x_{0}x_{2}-x_{1}^{2})=0
\end{array} \right.$ \\
$[0:0:s:t]$ & singular \rule{0pt}{13pt}\\
\bottomrule
\end{tabular}
\caption{points on $\esse={\Vi(x_{0}^{2}x_{3}^{}+x_{0}^{}x_{1}^{}x_{2}^{}+x_{1}^{3})}$.} \label{tabella0}
\end{table}
\begin{table}[hbt]
\centering
\begin{tabular}{ccc}
\toprule
coordinates of $\gr{a}$ & restrictions & $\TG_{\gr{a}}\esse \cap \esse$ (if smooth)\\
\toprule
$[1:t:-t^{2}s:s]$ & $s \neq 0 \neq t$ & line and irreducible conic\\
$[1:t:0:0]$ & $t \neq 0$ &
$\left\{\begin{array}{l}
x_{2}+x_{3}t^{2}=0 \\
x_{3}(x_{0}t\pm x_{1})=0
\end{array} \right.$ \rule{0pt}{20pt}\\
$[1:0:0:s]$ &  &
$\left\{\begin{array}{l}
x_{2}=0 \\
x_{1}^{2}x_{3}=0
\end{array} \right.$ \rule{0pt}{20pt}\\
$[0:1:t:0]$ &  &
$\left\{\begin{array}{l}
x_{3}=0 \\
x_{0}^{2}x_{2}=0
\end{array} \right.$ \rule{0pt}{20pt}\\
$[0:0:s:t]$ & & singular \rule{0pt}{13pt} \\
\bottomrule
\end{tabular}
\caption{points on $\esse=\Vi(x_{0}^{2}x_{2}^{}+x_{1}^{2}x_{3}^{})$.} \label{tabella}
\end{table}
If $\esse$ is projectively equivalent to ${\Vi(x_{0}^{2}x_{3}^{}+x_{0}^{}x_{1}^{}x_{2}^{}+x_{1}^{3})}$, then Table \ref{tabella0} shows that there are no T-points at all. If $\esse$ is projectively equivalent to ${\Vi(x_{0}^{2}x_{2}^{}+x_{1}^{2}x_{3}^{})}$, then $\mathcal{A}$ is contained in the line $[s:t:0:0]$ and $\mathcal{A}'$ is contained in the union of the lines $[s:0:0:t]$ and $[0:s:t:0]$, as shown in Table \ref{tabella}.\\

Now, let us assume $\gr{a}$ is smooth. Since it is not a T-point, $\TG_{\gr{a}}\esse$ cannot contain lines made up of T-points, so every such a line intersects $\TG_{\gr{a}}\esse$ in one and only one point. Since they are finite in number, the first statement of 1. is proved.\\
For the second statement, let $\gr{x} \neq \gr{a}$ be a point in $\pp^{3}_{\Kb}$ and let $Y=\Vi(P_{1,\gr{a}}) \cap \TG_{\gr{a}}\esse$. By \eqref{sviluppo}, the point $\gr{x} \in Y$ if and only if either $F(\gr{a}+t\gr{x})$ is the zero polynomial or the line through $\gr{a}$ and $\gr{x}$ intersects $\esse$ only in $\gr{a}$. This means that, if $\gr{x} \in Y$, also the whole line through it and $\gr{a}$ is contained in $Y$; the conclusion then holds if we prove that $Y$ is a curve, that is, $\Vi(P_{1,\gr{a}}) \nsupseteq \TG_{\gr{a}}\esse$.\\
In fact, $\gr{a}$ is not a T-point and so there exists a point $\gr{y}$ on $\esse \cap \TG_{\gr{a}}\esse$ such that the line $r$ through $\gr{y}$ and $\gr{a}$ does not lie on $\esse$. The line $r$ intersects $\esse$ in $\gr{a}$ with multiplicity two and in $\gr{y}$ with multiplicity one: this implies $\gr{y} \notin \Vi(P_{1,\gr{a}})$. Part 1. of the lemma is proved.\\

If $\gr{a}$ is singular, then by Lemma \ref{lemmacono} only finitely many planes through $\gr{a}$ contain a line on $\esse$ through $\gr{a}$. For any other choice $\pi$, the same argument of the smooth case holds, if we replace $\TG_{\gr{a}}\esse$ with $\pi$. This proves part 2. of the lemma.
\end{proof}
\end{lem}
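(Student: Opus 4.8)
The plan is to prove the lemma in four stages: (i) confine all the T-points of $\esse$ to a finite union of lines on $\esse$ together with a finite set; (ii) deduce the finiteness statement about T-points on $\TG_{\gr{a}}\esse$; (iii) analyze the curve $\Vi(P_{1,\gr{a}}) \cap \TG_{\gr{a}}\esse$ by means of the Taylor expansion \eqref{sviluppo}; (iv) reduce the singular case to the smooth one using Lemma \ref{lemmacono}. For stage (i) I would partition the T-points into the set $\mathcal{A}$ of those $P$ with $\TG_{P}\esse \cap \esse$ equal to three distinct lines, and its complement $\mathcal{A}'$ among T-points. If $\esse$ contains only finitely many lines, then $\mathcal{A}$ is finite (each of its points lies on two of those finitely many lines) and, by Remark \ref{surette}, $\mathcal{A}'$ is contained in a finite union of lines of $\esse$. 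If instead $\esse$ contains infinitely many lines, then --- being irreducible and not a cone --- it is a ruled cubic with a double line, hence (by the classical classification, cf. \cite{Conforto}, \cite{Abhyankar}) projectively equivalent to $\Vi(x_{0}^{2}x_{3}+x_{0}x_{1}x_{2}+x_{1}^{3})$ or to $\Vi(x_{0}^{2}x_{2}+x_{1}^{2}x_{3})$; for each of the two models I would parametrize the smooth points, write $\TG_{\gr{a}}\esse \cap \esse$ explicitly, and read off that the first model has no T-points and that for the second $\mathcal{A}$ and $\mathcal{A}'$ are contained in a fixed finite union of lines (this is the computation summarized in Tables \ref{tabella0}--\ref{tabella}). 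In every case the T-points of $\esse$ lie in $\ell_{1}\cup\dotsb\cup\ell_{k}\cup S$ with the $\ell_{i}$ lines on $\esse$ and $S$ finite; moreover, by Remark \ref{surette}, each $\ell_{i}$ consists entirely of singular points and T-points of $\esse$.

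For stage (ii), suppose $\gr{a}$ is smooth. If $\TG_{\gr{a}}\esse$ contained one of the lines $\ell_{i}$, then $\ell_{i}\subset \esse$ and $\TG_{\gr{a}}\esse\cap \esse = \ell_{i}\cup Q$ with $Q$ a conic; since $\gr{a}$ is a smooth point of $\esse$ it cannot be one of the singular-or-T-points of $\ell_{i}$, so $\gr{a}\notin\ell_{i}$, and because $\gr{a}$ is a singular point of the tangent-plane cubic $\ell_{i}\cup Q$ this forces $Q$ to be singular at $\gr{a}$, i.e. a union of (at most two) lines through $\gr{a}$; but then $\TG_{\gr{a}}\esse\cap\esse$ is set-theoretically a union of lines, contradicting that $\gr{a}$ is not a T-point. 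Hence $\TG_{\gr{a}}\esse$ contains none of the $\ell_{i}$, so it meets $\ell_{1}\cup\dotsb\cup\ell_{k}\cup S$ in finitely many points, which proves the first assertion of part 1.

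For stage (iii), set $Y:=\Vi(P_{1,\gr{a}})\cap\TG_{\gr{a}}\esse$; since $\gr{a}$ is smooth, $\TG_{\gr{a}}\esse=\Vi(P_{2,\so{a}})$. For $\gr{x}\in\TG_{\gr{a}}\esse$, \eqref{sviluppo} reduces to $F(\so{a}+t\so{x})=t^{2}P_{1,\so{a}}(\so{x})+t^{3}F(\so{x})$, so $\gr{x}\in Y$ exactly when the line $\overline{\gr{a}\gr{x}}$ either meets $\esse$ only at $\gr{a}$ or is contained in $\esse$; this is a property of the whole line through $\gr{a}$ and $\gr{x}$, so $Y$ is a union of lines through $\gr{a}$ (together with $\gr{a}$), and the same observation shows that any line through $\gr{a}$ lying on $\esse$ is contained in $Y$. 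It remains to see $Y\neq\TG_{\gr{a}}\esse$: since $\gr{a}$ is not a T-point there is $\gr{y}\in\esse\cap\TG_{\gr{a}}\esse$ with $\overline{\gr{a}\gr{y}}\not\subset\esse$, whence $F(\so{a}+t\so{y})=t^{2}P_{1,\so{a}}(\so{y})$ is not identically zero, so $P_{1,\so{a}}(\so{y})\neq 0$ and $\gr{y}\in\TG_{\gr{a}}\esse\setminus Y$. Therefore $Y$ is a proper plane conic which is a cone with vertex $\gr{a}$, hence a union of at most two lines through $\gr{a}$; this completes part 1. For part 2, when $\gr{a}$ is singular Lemma \ref{lemmacono} gives at most six lines of $\esse$ through $\gr{a}$, so all but finitely many planes $\pi$ through $\gr{a}$ contain no such line; discarding also the finitely many planes through $\gr{a}$ that contain one of the $\ell_{i}$ or that are components of the quadric $\Vi(P_{1,\gr{a}})$ (a genuine quadric since $\esse$ is not a cone), one has $P_{2,\so{a}}\equiv 0$, so \eqref{sviluppo} again reads $F(\so{a}+t\so{x})=t^{2}P_{1,\so{a}}(\so{x})+t^{3}F(\so{x})$ and the argument of stage (iii) shows $\Vi(P_{1,\gr{a}})\cap\pi$ is a proper plane conic which is a cone over $\gr{a}$, hence at most two lines through $\gr{a}$, while $\pi$ meets the finitely many lines and points carrying the T-points in finitely many points; the assertion about lines of $\esse$ through $\gr{a}$ is vacuous for such $\pi$.

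The main obstacle I expect is stage (i) in the presence of infinitely many lines: this forces one to invoke the classification of cubic surfaces with infinitely many lines and then carry out the explicit tangent-plane-section computation for the two ruled models, taking care of every degenerate configuration of the residual conic (a double line, or a line together with a smooth conic through $\gr{a}$, etc.) as catalogued in Remark \ref{surette}. The remaining steps are a routine, if careful, exploitation of the expansion \eqref{sviluppo} and of the fact that a plane conic which is a cone over one of its points has rank at most two.
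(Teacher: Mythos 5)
Your proposal follows essentially the same route as the paper: the same partition of T-points into $\mathcal{A}$ and $\mathcal{A}'$, the same dichotomy on the number of lines with the classification of ruled cubics and the two explicit models, the same use of the expansion \eqref{sviluppo} to show $\Vi(P_{1,\gr{a}})\cap\TG_{\gr{a}}\esse$ is a proper cone over $\gr{a}$, and the same reduction of the singular case via Lemma \ref{lemmacono}. Your stage (ii) argument (that $\TG_{\gr{a}}\esse$ cannot contain a line of T-points because the residual conic would then be singular at $\gr{a}$) merely spells out a step the paper leaves terse; the proof is correct.
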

\begin{proof}[Proof of Theorem \ref{teorema}
] We divide the proof into four steps.
\begin{enumerate}[leftmargin=0.3cm,itemindent=1.cm,labelwidth=\itemindent,labelsep=0.3cm,align=left]
\item[Step 1:] looking for the second point.\\
Either $\gr{a^{1}}$ is smooth or it is singular.\\

If $\gr{a^{1}}$ is smooth, then by hypotheses $\esse \cap \TG_{\gr{a^{1}}}\esse$ is a cubic curve, neither set-theoretically union of lines ($\gr{a^{1}}$ is not a T-point), nor the whole tangent plane ($\esse$ is irreducible).\\
Every line $\ell$ on $\TG_{\gr{a^{1}}}\esse$ through $\gr{a^{1}}$, but
those contained in $\TG_{\gr{a^{1}}}\esse \cap \Vi(P_{1,\gr{a^{1}}})$ as in Lemma \ref{lemmaTpoints},
has one and only one intersection with $\esse$ different from $\gr{a^{1}}$. Here we do not care about any line on $\TG_{\gr{a^{1}}}\esse \cap \esse$ through $\gr{a^{1}}$, since by Lemma \ref{lemmaTpoints} it would be contained in $\TG_{\gr{a^{1}}}\esse \cap \Vi(P_{1,\gr{a^{1}}})$ as well.\\
Fix a line $\ell$; the so-obtained $\gr{a^{2}}$ is smooth. Otherwise, $\ell$ would have multiplicity of intersection at least four with $\esse$, and therefore $\ell \subset \esse$, which is not.\\
Moreover, by Lemma \ref{lemmaTpoints}, $\gr{a^{2}}$ can be a T-point only for finitely many choices of $\ell$, and so these choices can be avoided.\\
By \eqref{sviluppo}, in coordinates we have, having chosen a representative $\so{a^{1}}$ for $\gr{a^{1}}$,
$$
\so{a^{2}}=F(\so{y})\cdot \so{a^{1}}-P_{1,\so{a^{1}}}(\so{y})\cdot \so{y},
$$
for any choice of $\so{y}=(y_{0},y_{1},y_{2},y_{3})$ representing the class $\gr{y} \in \TG_{\gr{a^{1}}}\esse$. Let us observe that $P_{1,\so{a^{1}}}(\so{y}) \neq 0$ and that $\so{a^{2}}$ has coordinates in $\mathbb{K}$.\\

If $\gr{a^{1}}$ is singular, the previous argument can be repeated by replacing the role of $\TG_{\gr{a^{1}}}\esse$ above with a plane $\pi$ satisfying Lemma \ref{lemmaTpoints}.\\

In both cases, we have constructed a smooth point $\gr{a^{2}}$ on $\esse$, which is not a T-point.
\item[Step 2:] looking for the third point.\\
The tangent plane process can be repeated as in step 1 --- smooth case --- starting from $\gr{a^{2}}$ to construct next point $\gr{a^{3}}$.
Summarizing, every line on $\TG_{\gr{a^{2}}} \esse$ through $\gr{a^{2}}$ with the exception of
\begin{itemize}
\item finitely many (by Lemma \ref{lemmaTpoints}) lines through T-points,
\item at most two lines in $\TG_{\gr{a^{2}}}\esse \cap \Vi(P_{1,\gr{a^{2}}})$ as in Lemma \ref{lemmaTpoints}
\end{itemize}
has exactly one intersection with $\esse$ different from $\gr{a^{2}}$, say $\gr{a^{3}}$. It is smooth and not a T-point.\\
To state that $\gr{a^{3}}$ is in general position with $\gr{a^{1}}$ and $\gr{a^{2}}$, we only need to verify that it does not lie on the line $\ell'$ through them. This is for free, since $\gr{a^{3}}$ belongs to $\TG_{\gr{a^{2}}}\esse$ but $\gr{a^{1}}$ does not, otherwise $\ell' \subseteq \esse$, which is not by construction.
\item[Step 3:] looking for the fourth point.\\
The tangent plane process can be repeated as in step 1 --- smooth case --- starting from $\gr{a^{3}}$ to construct next point $\gr{a^{4}}$. We need to choose it not on the plane $\pi_{123}$ containing $\gr{a^{1}}$, $\gr{a^{2}}$ and $\gr{a^{3}}$.\\
The planes $\TG_{\gr{a^{3}}}\esse$ and $\pi_{123}$ are distinct --- for example, the first one does not contain $\gr{a^{2}}$ by construction --- so their intersection is a line through $\gr{a^{3}}$, say $\ell''$.
\begin{cleim}
The system
\begin{equation}
\label{sistemas}
\left\{ \begin{array}{l}
\gr{y} \in \esse \\
\gr{y} \in \TG_{\gr{a^{3}}} \esse \\
\TG_{\gr{y}} \esse \ni \gr{a^{2}}
\end{array}
\right.
\end{equation}
which can be translated in homogeneous equations of degree $3,1,2$ respectively, has finitely many solutions $\gr{y} \in \pp^{3}_{\Kb}$.\\
Indeed, the system represents the intersection on the plane $\TG_{\gr{a^{3}}} \esse$ between the cubic curve $\mathcal{C}=\esse \cap \TG_{\gr{a^{3}}} \esse$ and the conic $\mathcal{Q}$ defined on $\TG_{\gr{a^{3}}} \esse$ by the condition ${\TG_{\gr{y}} \esse \ni \gr{a^{2}}}$. By construction, $\gr{a^{3}}$ is not a T-point and therefore $\mathcal{C}$ is either irreducible or union of a line and an irreducible conic containing $\gr{a^{3}}$; $\mathcal{Q}$ does not pass through $\gr{a^{3}}$ and so it cannot be contained in $\mathcal{C}$. This proves the claim.
\end{cleim}

The finitely many solutions of system \eqref{sistemas} correspond to finitely many lines on $\TG_{\gr{a^{3}}}\esse$ through $\gr{a^{3}}$. Since we want $\gr{a^{2}} \notin \TG_{\gr{a^{4}}} \esse$, we will avoid them.\\
Summarizing, every line on $\TG_{\gr{a^{3}}} \esse$ through $\gr{a^{3}}$ with the exception of
\begin{itemize}
\item finitely many lines through the solutions $\gr{y}$ of system \eqref{sistemas},
\item $\ell''$,
\item finitely many (by Lemma \ref{lemmaTpoints}) lines through T-points,
\item at most two lines in $\TG_{\gr{a^{3}}}\esse \cap \Vi(P_{1,\gr{a^{3}}})$ as in Lemma \ref{lemmaTpoints}
\end{itemize}
has exactly one intersection with $\esse$ different from $\gr{a^{3}}$, say $\gr{a^{4}}$. It is smooth and not a T-point, moreover $\gr{a^{2}} \notin \TG_{\gr{a^{4}}} \esse$.
\item[Step 4:] looking for the fifth point.\\
We can apply the usual tangent plane process to find $\gr{a^{5}}$ in general position with $\gr{a^{1}}, \gr{a^{2}}, \gr{a^{3}}$ and $\gr{a^{4}}$. Let us call $\pi_{ijk}$ the plane through different $\gr{a^{i}},\gr{a^{j}},\gr{a^{k}}$. The planes $\pi_{134}, \pi_{234}$ and $\pi_{124}$ intersect $\TG_{\gr{a^{4}}}\esse$ into three lines through $\gr{a^{4}}$: in fact they are four different planes, since $\gr{a^{2}},\gr{a^{3}} \notin \TG_{\gr{a^{4}}}\esse$.\\
The line $\pi_{123} \cap \TG_{\gr{a^{4}}}\esse$ cannot be contained in $\TG_{\gr{a^{4}}}\esse \cap \esse$, since $\gr{a^{4}} \notin \pi_{123}$ and by construction  $\gr{a^{4}}$ is not a T-point. This means that $\pi_{123} \cap \TG_{\gr{a^{4}}}\esse \cap \esse$ contains at most three points.\\
Summarizing, every line on $\TG_{\gr{a^{4}}} \esse$ through $\gr{a^{4}}$ with the exception of
\begin{itemize}
\item three lines lying on the planes $\pi_{134}, \pi_{234}$ and $\pi_{124}$,
\item at most three lines through the points in $\pi_{123} \cap \TG_{\gr{a^{4}}}\esse \cap \esse$,
\item at most two lines in $\TG_{\gr{a^{4}}}\esse \cap \Vi(P_{1,\gr{a^{4}}})$ as in Lemma \ref{lemmaTpoints}
\end{itemize}
has exactly one intersection with $\esse$ different from $\gr{a^{4}}$, say $\gr{a^{5}}$, in general position with $\gr{a^{1}}$, $\gr{a^{2}}$, $\gr{a^{3}}$, $\gr{a^{4}}$.
\end{enumerate}
\end{proof}
\begin{rmk}
Following the proof of Theorem \ref{teorema}, it is possible to implement an algorithm which requires a $\mathbb{K}$-point on $\esse$, not a T-point, and ensures five $\mathbb{K}$-points in general position on $\esse$. To test if a given point is a T-point or not, it is sufficient to check the reducibility of a polynomial of degree three in three variables, a task which can be easily performed with a software computation.
\end{rmk}
\begin{rmk}
If $\esse$ is a smooth cubic surface, then any T-point $P$ has $\TG_{P}\esse \cap \esse$ made up of three distinct lines. In such a situation, Theorem \ref{teorema} can be proved with the weaker hypothesis: the starting point $\gr{a^{1}}$ is not an Eckardt point.
\end{rmk}
\begin{rmk}
In the statement of Theorem \ref{teorema} we require that $\gr{a^{1}}$ is not a T-point. Indeed, if $\gr{a^{1}}$ is Eckardt, then the tangent plane process fails at the very first step. If $\gr{a^{1}}$ is a non-Eckardt T-point, then the tangent plane process could give rise to either singular or other T-points, which can make one loose control in subsequent steps.\\
In facts, this does happen in the following example: take $\esse=\Vi(x_{0}x_{1}x_{3}+x_{2}^{3}+x_{2}x_{3}^{2})$ and $\gr{a^{1}}=[0:0:0:1]$. The tangent plane process gives rise to points on the line $[s:t:0:0]$, which are either singular or Eckardt points. The process then stops at the second step.
\end{rmk}

As pointed out by the referee, codimension three AG subschemes have been considered also in \cite{MigliorePeterson}, where they are obtained as zero loci of sections of certain rank-three sheaves. In the case of five points in general position in $\pp^{3}_{\Kb}$, it turns out that all such sets are the zero loci of appropriate sections of the bundle $\Omega_{\pp^{3}}(3)$, which can be interpreted as four-tuple quadrics, that is, linear combinations (using linear forms as coefficients) of the syzygies of the map $(x_{0} \; x_{1} \; x_{2} \; x_{3})$. The membership of such a zero locus to a surface $\esse$ imposes conditions to the linear combination.
%
%
\section{Main results and further generalizations} \label{three}
In this last section, we firstly make use of Theorem \ref{teorema} and Algorithm \ref{algo} to prove Theorem \ref{mainthm}; if we drop the requirement of the starting point, then a weaker result holds (Proposition \ref{propouno}). After discussing the cases of reducible surfaces and cones, we state Theorem \ref{thmulti}. A concrete example is finally given.
\theoremstyle{plain}
\newtheorem*{thmole}{Theorem \ref{mainthm}}
\newtheorem*{thmultif}{Theorem \ref{thmulti}}
\newtheorem*{propoun}{Proposition \ref{propouno}}

\begin{proof}[Proof of Theorem \ref{mainthm}]
Given $\gr{a^{1}}$, one can apply Theorem \ref{teorema} and construct four other $\mathbb{K}$-points $\gr{a^{2}}$, $\gr{a^{3}}$, $\gr{a^{4}}$, $\gr{a^{5}}$ on $\esse$ such that they are all together in general position. With these initial data, Algorithm \ref{algo} ensures a Pfaffian $\mathbb{K}$-representation of $\esse$.
\end{proof}
%
\begin{rmk}
Let us work on $\Kb$ and let $\esse$ be general. In Remark \ref{tutteequiv} we saw that the Pfaffian representations produced by Algorithm \ref{algo} are all equivalent, once fixed the inputs $\gr{a^{1}},\gr{a^{2}},\gr{a^{3}},\gr{a^{4}},\gr{a^{5}}$. The constructive proof of Theorem \ref{mainthm} provides a new algorithm to construct many Pfaffian representations starting from just one point $\gr{a^{1}}$: we claim that neither this algorithm is surjective onto the possible Pfaffian representations of $\esse$, once fixed $\gr{a^{1}}$.\\
Indeed, by Remark \ref{neaspetto5}, the space of essentially different Pfaffian representations of $\esse$ is five-dimensional. Since we can suppose $\esse$ is smooth, $\gr{a^{1}}$ is not singular. The procedure described in the proof of Theorem \ref{teorema} consists  in taking a point on a plane cubic curve in each step, and so the space of sets of five points obtained starting from $\gr{a^{1}}$ is four-dimensional. The conclusion follows again by Remark \ref{tutteequiv}.
\end{rmk}
\begin{rmk}
The procedure lying beneath the proof of Theorem \ref{mainthm} involves only linear equations and can be implemented in a deterministic algorithm. 
\end{rmk}
\subsection{Weakening hypotheses}
\label{commenti}
\subsubsection{No starting points}
One of the hypotheses of Theorem \ref{mainthm} was a $\mathbb{K}$-point on $\esse$. If this is not given, then one can manage to find a $\mathbb{K}'$-point $\gr{a}$, being $\mathbb{K}'$ an algebraic extension of degree at most three, simply by solving a polynomial equation of degree three (given by intersecting $\esse$ with two arbitrary planes). For the general choice of these two planes, $\gr{a}$ is not a T-point and so Theorem \ref{mainthm} applies. This proves Proposition \ref{propouno}.
%
\subsubsection{Reducible surfaces}
Let $\esse$ be a reducible cubic surface. Then $\esse$ is either union of three planes with equation $\pi_{1},\pi_{2},\pi_{3}$ or union of a plane $\pi$ and a quadratic irreducible surface $\mathcal{S}$. In both cases, simple Pfaffian representations can be constructed, as we will show.\\
In the first case, a Pfaffian representation is given by
$
\left(
\begin{array}{c|c}
0 & M \\
\hline -M & 0
\end{array}
\right)$, where $$M = 
\left(
\begin{array}{ccc}
\pi_{1} & 0 & 0 \\
0 & \pi_{2} & 0 \\
0 & 0 & \pi_{3}
\end{array}
\right).
$$
In the second case, let us consider the matrix
\begin{equation*}
\mathbb{T}'=\aprimat{ccc}
0&-x_{3}&-x_2\\
x_{3}&0&-x_{1}\\
x_2&x_1&0
\chiudimat\mbox{.}
\end{equation*}
If $\mathcal{S} \ni [1:0:0:0]$, then we can find three linear forms $L_{1}, L_{2}, L_{3}$ such that an equation for $\mathcal{S}$ is $\sum_{i=1}^{3}(-1)^{i+1}L_{i}x_{i}$. A Pfaffian representation of $\mathcal{S}$ is then given by
$$P=\left(
\begin{array}{c|c}
\mathbb{T}' &
\begin{array}{c}
L_{1}\\L_{2}\\L_{3}
\end{array}
\\
\hline
\begin{array}{ccc}
-L_{1} & -L_{2} & -L_{3}
\end{array}
 & 0
\end{array}
\right)$$
by formula \eqref{formulapfaff}.

If $[1:0:0:0] \notin \mathcal{S}$, then it is sufficient to apply to $x_{1},x_{2},x_{3}$ in $\mathbb{T}'$ the projectivity which maps a given point $\gr{a}$ on $\mathcal{S}$ to $[1:0:0:0]$, as described in subsection \ref{analogo}. Again by formula \eqref{formulapfaff} one finds three linear forms and a Pfaffian representation $P$ of $\mathcal{S}$ as above.

A Pfaffian representation of $\esse$ is then given by
$$
\left(
\begin{array}{c|c|c}
0 & 0 & \pi \\
\hline
0 & P & 0 \\
\hline
-\pi & 0 & 0\\
\end{array}
\right).
$$
\begin{rmk}
\label{gradosei}
Let $F \in \mathbb{K}[x_{0},x_{1},x_{2},x_{3}]_{3}$ be an equation for the reducible surface $\esse$. The Pfaffian representations just constructed are not $\mathbb{K}$-representations, in general. This is due to the fact that the splitting field of a polynomial of degree three is generally an algebraic extension of $\mathbb{K}$ of degree six.\\
However, for such reducible surfaces we can state: it is possible to construct explicitly a Pfaffian $\mathbb{K}'$-representation, being $\mathbb{K'}$ an algebraic extension of $\mathbb{K}$ of degree at most six.
\end{rmk}
\subsubsection{Cones}
Let $\esse$ be an irreducible cone. If we suppose non-restrictively that its vertex is $[1:0:0:0]$, then $\esse$ is defined by an equation $F \in \mathbb{K}[x_{1},x_{2},x_{3}]$. Let us call $\mathcal{C}$ the plane cubic curve defined by $F$ in $\pp^{3}_{\bar{\mathbb{K}}} \cap \Vi(x_{0})$.\\
As previously done, we can find a $\mathbb{K}'$-point $\gr{a}$ on $\mathcal{C}$, being $\mathbb{K}'$ an algebraic extension of $\mathbb{K}$, simply by solving a polynomial equation of degree three.\\
The construction of $\mathbb{K}'$-points on a plane cubic curve is a subject widely studied in ,literature (see for example \cite{SilvermanTate}). Starting from a set $X$ of $\mathbb{K}'$-points, it consists in considering tangent lines to the curve in each point of $X$, and secant lines through each pair of points of $X$; the third intersection of such lines with $\mathcal{C}$ is then set as a new element in $X$.\\
This process fails, for particular choices of $X=\{\gr{a}\}$: for example, if $\gr{a}$ is an inflection point of the curve. For a general choice of $\gr{a}$, this process produces a lot of $\mathbb{K}'$-points on $\mathcal{C}$, and we can manage to find five points among them such that no three are collinear. Then the following proposition applies.
\begin{propo}
\label{suiconi}
Let $\esse$ be a cone over a plane cubic curve $\mathcal{C}$, with equation $F \in \mathbb{K}'[x_{0},x_{1},x_{2},x_{3}]_{3}$. If there exist five $\mathbb{K}'$-points on $\mathcal{C}$ such that no three of them are on a line, then there exist five $\mathbb{K}'$-points in general position on $\esse$.
\begin{proof}
We can suppose the vertex is $[1:0:0:0]$, so that the equation of the plane curve (and the cone) is $C=C(x_{1},x_{2},x_{3})$. Let $\so{a^{i}}=(a_{0}^{i},a_{1}^{i},a_{2}^{i},a_{3}^{i})$ represent the five points. The vanishing of each of the $4 \times 4$ minors of the matrix
\begin{equation}
\label{matricelift}
\aprimat{cccc}
y_{1} & a^{1}_{1} & a^{1}_{2} & a^{1}_{3} \\
y_{2} & a^{2}_{1} & a^{2}_{2} & a^{2}_{3} \\
y_{3} & a^{3}_{1} & a^{3}_{2} & a^{3}_{3} \\
y_{4} & a^{4}_{1} & a^{4}_{2} & a^{4}_{3} \\
y_{5} & a^{5}_{1} & a^{5}_{2} & a^{5}_{3} \\
\chiudimat
\end{equation}
imposes a non-trivial close condition to $\so{y} \in \mathbb{A}^{5}_{\mathbb{K}'}$, since the $3 \times 3$ minors of the matrix obtained  by deleting the first column in \eqref{matricelift} are non-vanishing by hypotheses. So there exists $\so{y}$ satisfying none of these conditions and we get five points in general positions on $\esse$.
\end{proof}
\end{propo}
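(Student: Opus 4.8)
The plan is to lift the five given planar points along the rulings of the cone and then choose the ``heights'' so as to prevent any four of them from being coplanar. After a linear change of coordinates I may assume the vertex of $\esse$ is $[1:0:0:0]$, so that $F$ involves only $x_{1},x_{2},x_{3}$ and coincides with the equation $C(x_{1},x_{2},x_{3})$ of $\mathcal{C}$, the latter viewed as the plane cubic $\esse\cap\Vi(x_{0})$. The ruling of $\esse$ joining the vertex to a point $[0:a_{1}:a_{2}:a_{3}]$ of $\mathcal{C}$ is $\{[y:a_{1}:a_{2}:a_{3}] : y\in\Kb\}$, and every point of it lies on $\esse$ precisely because $F$ does not involve $x_{0}$. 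Hence, writing $\so{a^{i}}=(a^{i}_{0},a^{i}_{1},a^{i}_{2},a^{i}_{3})$ for representatives of the five $\mathbb{K}'$-points of $\mathcal{C}$ (so that $a^{i}_{0}=0$), the natural candidates on $\esse$ are $\gr{b^{i}}=[y_{i}:a^{i}_{1}:a^{i}_{2}:a^{i}_{3}]$, $i=1,\dotsc,5$, depending on parameters $y_{1},\dotsc,y_{5}\in\mathbb{K}'$ still to be chosen; the only thing left to arrange is that they be in general position, i.e. that no four of them be coplanar.

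Next I would reformulate ``general position'' as the non-vanishing of all five maximal minors of the $5\times4$ matrix whose $i$-th row is $(y_{i},a^{i}_{1},a^{i}_{2},a^{i}_{3})$. Expanding the minor obtained by deleting the $k$-th row along its first column rewrites it as $\sum_{i\neq k}\pm\,y_{i}\,D_{k,i}$, where $D_{k,i}$ is the $3\times3$ determinant formed by the rows $(a^{j}_{1},a^{j}_{2},a^{j}_{3})$ with $j\in\{1,\dotsc,5\}\setminus\{k,i\}$. Such a $D_{k,i}$ vanishes exactly when the three planar points indexed by $\{1,\dotsc,5\}\setminus\{k,i\}$ are collinear, so the hypothesis that no three of the five points of $\mathcal{C}$ lie on a line forces every $D_{k,i}$ to be nonzero. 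Consequently each of the five maximal minors is a genuinely nonzero (homogeneous linear) form in $\so{y}=(y_{1},\dotsc,y_{5})$, and its zero set is a proper linear subspace of $\mathbb{A}^{5}_{\mathbb{K}'}$.

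It then remains to choose $\so{y}$ outside the union of these five hyperplanes. This is possible because $\mathbb{K}'$, being an algebraic extension of the characteristic-zero field $\mathbb{K}$, is infinite, and a vector space over an infinite field is never a finite union of proper subspaces. For such a $\so{y}$ the five points $\gr{b^{1}},\dotsc,\gr{b^{5}}$ are $\mathbb{K}'$-points in general position on $\esse$, which is what the proposition asserts.

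The single step carrying real content --- and the one I expect to be the crux --- is the verification that each maximal minor is not the zero polynomial in the $y_{i}$; this is precisely where the ``no three collinear'' hypothesis enters, via the non-vanishing of the $3\times3$ minors $D_{k,i}$. The remaining ingredients (normalizing the vertex to $[1:0:0:0]$, the linear-algebra translation of general position, and the infinitude of $\mathbb{K}'$ guaranteeing an admissible $\so{y}$) are routine.
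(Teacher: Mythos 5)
Your argument is correct and is essentially the paper's own proof: both lift the five planar points along the rulings with undetermined first coordinates $y_{i}$, observe that each $4\times4$ minor of the resulting $5\times4$ matrix is a nonzero linear form in $\so{y}$ because its coefficients are the $3\times3$ minors made nonzero by the ``no three collinear'' hypothesis, and then pick $\so{y}$ off the union of the resulting proper closed subsets. You merely spell out the cofactor expansion and the infinitude of $\mathbb{K}'$, which the paper leaves implicit.
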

Let us remark that also in the case of cones it is possible to implement an algorithm which requires an equation $F \in \mathbb{K}[x_{0},x_{1},x_{2},x_{3}]$ for the surface $\esse$ and ensures a Pfaffian $\mathbb{K'}$-representation of $\esse$, being $[\mathbb{K}':\mathbb{K}]\leq 3$.\\
Summarizing, we can prove Theorem \ref{thmulti}, which is a generalization of Proposition \ref{ognisuperficie}.
\begin{proof}[Proof of Theorem \ref{thmulti}]
It follows from Proposition \ref{propouno}, Remark \ref{gradosei} and from the discussion about cones made above.
\end{proof}
%
%
%
%
\subsection{An example}
Let $F=x_{0}x_{1}^{2}+x_{1}x_{3}^{2}+x_{2}^{3}$ be the equation of $\esse$, the unique cubic surface which does not admit a linear determinantal representation by \cite{BrunduLogar}, up to projectivity. Let us consider the point $\gr{a^{1}}=[1:0:0:0]$, which is singular and therefore not a T-point. Then Theorem \ref{mainthm} applies, and we can construct explicitly a Pfaffian $\mathbb{Q}$-representation of $\esse$.\\
According to the proof of Theorem \ref{teorema}, we choose the plane $x_{3}=0$, which does not cut $\esse$ in three lines. Considering the point $[1:1:0:0]$, the line through it and $\gr{a^{1}}$ intersects $\esse$ in $\gr{a^{2}}=[0:1:0:0]$.\\
We have
$$\TG_{\gr{a^{2}}}\esse \cap \esse : \left\{ \begin{array}{l} x_{0}=0 \\ x_{1}x_{3}^{2}+x_{2}^{3}=0 \end{array} \right.$$
and so we choose a point on $x_{0}=0$, say $[0:0:1:1]$. The line through it and $\gr{a^{2}}$ intersects $\esse$ in $\gr{a^{3}}=[0:-1:1:1]$.\\
We have
$$
\TG_{\gr{a^{3}}}\esse \cap \esse : \left\{ \begin{array}{l} x_{0}+x_{1}+3x_{2}-2x_{3}=0 \\ -x_{1}^{3}-3x_{1}^{2}x_{2}+2x_{1}^{2}x_{3}+x_{1}x_{3}^{2}+x_{2}^{3}=0 \end{array} \right.
$$
and so we choose a point satisfying the first equation, say $[5:0:-1:1]$. The  line through it and $\gr{a^{3}}$ intersects $\esse$ in $\gr{a^{4}}=[-10:1:1:-3]$.\\
We have
$$
\TG_{\gr{a^{4}}}\esse \cap \esse : \left\{ \begin{array}{l} x_{0}-11x_{1}+3x_{2}-6x_{3}=0 \\ 11x_{1}^{3}-3x_{1}^{2}x_{2}+6x_{1}^{2}x_{3}+x_{1}x_{3}^{2}+x_{2}^{3}=0 \end{array} \right.
$$
and so we choose a point satisfying the first equation, say $[40:2:-2:2]$. The  line through it and $\gr{a^{4}}$ intersects $\esse$ in $\gr{a^{5}}=[95:1:-6:11]$.\\
A Pfaffian $\mathbb{Q}$-representation can be obtained via Algorithm \ref{algo}. For example, simplifying denominators, we have $P=(p_{ij})$ with the following entries:
$$
\begin{array}{lcl}
p_{12} = 0, & \quad &
p_{13} = x_{2}-x_{3},\\
p_{14} = 0, & \quad &
p_{15} = 3x_{2}+x_{3},\\
p_{16} = 1470x_{1}+686x_{2}+588x_{3}, & \quad &
p_{23} = -x_{2}+x_{3},\\
p_{24} = 34x_{0}-510x_{1}-170x_{2}-340x_{3}, & \quad &
p_{25} = 2x_{1}+x_{2}+x_{3},\\
p_{26} = 1372x_{1}+588x_{3}, & \quad &
p_{34} = 8670x_{1}+6120x_{2}+2550x_{3},\\
p_{35} = -34x_{1}-17x_{2}-17x_{3}, & \quad &
p_{36} = -23324x_{1}-10829x_{3},\\
p_{45} = 0, & \quad &
p_{46} = 774690x_{1}-624750x_{2},\\
p_{56} = -21658x_{1}+11662x_{2}+833x_{3}.
\end{array}
$$

\begin{ack}
The author thanks his supervisor Emilia Mezzetti for her constant support and wise pieces of advice; he also thanks Alessandro Logar for many useful conversations and Daniele Faenzi for interesting discussions. In addition, he would like to thank the referee for helpful comments.
\end{ack}


\end{document}